\documentclass[11pt, reqno]{amsart}

\usepackage{amsmath,amssymb,amsthm,amsfonts,verbatim}
\usepackage{microtype}
\usepackage[all,2cell]{xy}
\usepackage{mathtools}
\usepackage{graphicx}
\usepackage{pinlabel}
\usepackage{hyperref}
\usepackage{mathrsfs}
\usepackage{color}
\usepackage[dvipsnames]{xcolor}
\usepackage{enumitem}
\usepackage{cite}
\usepackage{soul}

\CompileMatrices

\usepackage[top=1.2in,bottom=1.2in,left=1in,right=1in]{geometry}

\usepackage{hyperref} 
\hypersetup{          
	colorlinks=true, breaklinks, linkcolor=[RGB]{51 102 204}, filecolor=Orchid, urlcolor=[RGB]{51 102 204},
	citecolor=Orchid, linktoc=all, }
\usepackage[nameinlink]{cleveref}

\theoremstyle{plain}
\newtheorem{theorem}{Theorem}[section]

    \AddToHook{env/maincor/begin}{\crefalias{maintheorem}{maincor}}
    \crefname{maincor}{corollary}{corollaries}

    \AddToHook{env/problem/begin}{\crefalias{theorem}{problem}}
    \crefname{problem}{problem}{problems}

    \AddToHook{env/proposition/begin}{\crefalias{theorem}{proposition}}
    \crefname{proposition}{proposition}{propositions}
\newtheorem{lemma}[theorem]{Lemma}
    \AddToHook{env/lemma/begin}{\crefalias{theorem}{lemma}}
    \crefname{lemma}{lemma}{lemmas}
\newtheorem{conjecture}[theorem]{Conjecture}
    \AddToHook{env/conjecture/begin}{\crefalias{theorem}{conjecture}}
    \crefname{conjecture}{conjecture}{conjectures}

    \AddToHook{env/fact/begin}{\crefalias{theorem}{fact}}
    \crefname{fact}{fact}{facts}

    \AddToHook{env/corollary/begin}{\crefalias{theorem}{corollary}}
    \crefname{corollary}{corollary}{corollaries}

    \AddToHook{env/claim/begin}{\crefalias{theorem}{claim}}
    \crefname{claim}{claim}{claims}

\theoremstyle{definition}
\newtheorem{definition}[theorem]{Definition}
    \AddToHook{env/definition/begin}{\crefalias{theorem}{definition}}
    \crefname{definition}{definition}{definitions}

    \AddToHook{env/example/begin}{\crefalias{theorem}{example}}
    \crefname{example}{example}{examples}

    \AddToHook{env/construction/begin}{\crefalias{theorem}{construction}}
    \crefname{construction}{construction}{constructions}

    \AddToHook{env/exercise/begin}{\crefalias{theorem}{exercise}}
    \crefname{exercise}{exercise}{exercises}

    \AddToHook{env/remark/begin}{\crefalias{theorem}{remark}}
    \crefname{remark}{remark}{remarks}

    \AddToHook{env/question/begin}{\crefalias{theorem}{question}}
    \crefname{question}{question}{questions}

    \AddToHook{env/answer/begin}{\crefalias{theorem}{answer}}
    \crefname{answer}{answer}{answers}

    \AddToHook{env/convention/begin}{\crefalias{theorem}{convention}}
    \crefname{convention}{convention}{conventions}

\newcommand{\nc}{\newcommand}
\nc{\dmo}{\DeclareMathOperator}
\nc{\OO}{\mathcal{O}}
\nc{\cA}{\mathcal{A}}
\nc{\cB}{\mathcal{B}}
\nc{\sB}{\mathscr{B}}
\nc{\C}{\mathbb{C}}
\nc{\cC}{\mathcal{C}}
\nc{\sC}{\mathscr{C}}
\nc{\BB}{\mathbb{B}}
\nc{\LL}{\mathcal{L}}
\nc{\bd}{\mathbf{d}}
\nc{\DD}{\mathbb{D}}
\nc{\cD}{\mathcal{D}}
\nc{\sD}{\mathscr{D}}
\nc{\bF}{\mathbb{F}}
\nc{\cF}{\mathcal{F}}
\nc{\cG}{\mathcal{G}}
\nc{\cI}{\mathcal{I}}
\nc{\cK}{\mathcal{K}}
\nc{\cL}{\mathcal{L}}
\nc{\cM}{\mathcal{M}}
\nc{\bM}{\mathbf{M}}
\nc{\N}{\mathbb{N}}
\nc{\cN}{\mathcal{N}}
\nc{\cO}{\mathcal{O}}
\nc{\bp}{\mathbf{p}}
\nc{\bP}{\mathbb{P}}
\nc{\cP}{\mathcal{P}}
\nc{\Q}{\mathbb{Q}}
\nc{\R}{\mathbb{R}}
\nc{\cS}{\mathcal{S}}
\nc{\cT}{\mathcal T}
\nc{\cU}{\mathcal U}
\nc{\bV}{\mathbb V}
\nc{\cX}{\mathcal{X}}
\nc{\cY}{\mathcal{Y}}
\nc{\Z}{\mathbb{Z}}
\nc{\disk}{\mathbb{D}}
\nc{\hyp}{\mathbb{H}}

\nc{\CP}{\mathbb{CP}}
\nc{\RP}{\mathbb{RP}}
\dmo{\Mod}{Mod}
\dmo{\br}{B}
\dmo{\PMod}{PMod}
\dmo{\LMod}{LMod}
\dmo{\Diff}{Diff}
\dmo{\Homeo}{Homeo}
\dmo{\dist}{dist}
\dmo\BDiff{BDiff}
\dmo\SO{SO}
\dmo\Sp{Sp}
\dmo\Hom{Hom}
\dmo\SL{SL}
\dmo\rank{rank}
\dmo\sig{sig}
\dmo\Out{Out}
\dmo\Aut{Aut}
\dmo\Inn{Inn}
\dmo\GL{GL}
\dmo\PGL{PGL}
\dmo\Gr{Gr}
\dmo\PSL{PSL}
\dmo\BHomeo{BHomeo}
\dmo\EHomeo{EHomeo}
\dmo\EDiff{EDiff}
\dmo\Disc{Disc}
\dmo\Aff{Aff}

\renewcommand{\hat}{\widehat}
\renewcommand{\bar}{\overline}

\dmo\Teich{Teich}
\dmo\Fix{Fix}
\nc{\pair}[1]{\ensuremath{\left\langle #1 \right\rangle}}
\nc{\abs}[1]{\ensuremath{\left| #1 \right|}}
\nc{\action}{\circlearrowright}
\nc{\abcd}[4]{\ensuremath{\left(\begin{array}{cc} #1 & #2 \\ #3 & #4 \end{array}\right)}}
\nc{\into}\hookrightarrow
\dmo{\Isom}{Isom}
\nc{\normal}{\vartriangleleft}
\dmo{\Vol}{Vol}
\dmo{\im}{Im}
\dmo{\Push}{Push}
\dmo{\Conf}{Conf}
\dmo{\PConf}{PConf}
\dmo{\PB}{PB}
\dmo{\id}{id}
\dmo{\Jac}{Jac}
\dmo{\Pic}{Pic}
\dmo{\Stab}{Stab}
\dmo{\Arf}{Arf}
\dmo{\End}{End}
\dmo{\Ext}{Ext}
\dmo{\Gal}{Gal}
\dmo{\lcm}{lcm}
\dmo{\ab}{ab}
\dmo{\opp}{op}
\dmo{\SU}{SU}
\dmo{\OT}{\Omega \mathcal{T}}
\dmo{\OM}{\Omega \mathcal{M}}
\dmo{\PH}{\mathbb{P}\mathcal{H}}
\dmo{\spin}{spin}
\dmo{\even}{even}
\dmo{\odd}{odd}
\dmo{\comp}{\mathcal{H}}
\dmo{\Mgk}{\mathcal{M}_{g, \underline{\kappa}}}
\dmo{\orb}{orb}
\dmo{\AJ}{AJ}
\dmo{\Ck}{\mathsf{C}(\underline{\kappa})}
\dmo{\Int}{Int}
\dmo{\pr}{pr}
\dmo{\lab}{lab}
\dmo{\Sym}{Sym}
\dmo{\Ann}{Ann}
\dmo{\Rad}{Rad}
\dmo{\Ind}{Ind}
\dmo{\Div}{Div}
\dmo{\Res}{Res}
\dmo{\Hur}{Hur}
\dmo{\vcd}{vcd}
\dmo{\codim}{codim}

\nc{\Span}[1]{\operatorname{Span}(#1)}

\renewcommand{\epsilon}{\varepsilon}
\renewcommand{\tilde}{\widetilde}

\renewcommand{\ge}{\geqslant}
\nc{\coloneq}{\mathrel{\mathop:}\mkern-1.2mu=}
\nc{\margin}[1]{\marginpar{\scriptsize #1}}
\nc{\para}[1]{\medskip\noindent\textbf{#1.}}
\definecolor{myblue}{RGB}{102,153, 255}
\definecolor{myred}{RGB}{204,0,0}
\definecolor{mygreen}{RGB}{0,204,0}
\definecolor{myorange}{RGB}{255,102,0}
\definecolor{mypurple}{RGB}{138,43,226}
\nc{\red}[1]{\textcolor{myred}{#1}}
\nc{\blue}[1]{\textcolor{myblue}{#1}}

\usepackage{tikz}
\usepackage{amsmath,amssymb}
\usepackage{tikz-cd}
\usetikzlibrary{decorations.markings,calc}

\author[Shile Wang]{Shile Wang}

\address{School of Mathematical Sciences, Peking University, Beijing, China}
\email{wangshile@stu.pku.edu.cn}

\title{A Faithful Action of the Mapping Class Group $\mathrm{Mod}(S_g)$ on the $(2g-1)$-Sphere}

\date{\today}

\begin{document}

\begin{abstract}
We prove that $\mathrm{Mod}(S_g)$ acts faithfully by orientation-preserving homeomorphisms on a $(2g-1)$-sphere when $g \geq 3$ using equivariant blow-up and bi-orderability of the Torelli group.
\end{abstract}
\maketitle{}

\section{Introduction}

Let $S_g$ be the connected, closed, orientable surface of genus $g$, and $\mathbb{S}^n$ be the $n$-sphere. Let $\mathrm{Homeo}^+(X)$ be the orientation-preserving homeomorphism group of an orientable manifold $X$. The mapping class group $\mathrm{Mod}(X)$ is the group of isotopy classes of orientation-preserving homeomorphisms of $X$, that is, $\pi_0(\mathrm{Homeo}^+(X))$.

In this paper, we focus on the injective homomorphisms from $\mathrm{Mod}(S_g)$ to $\mathrm{Homeo}^+(\mathbb{S}^{n})$. Much work has been done on the homomorphisms between mapping class groups, braid groups, and transformation groups of particular manifolds, concerning the constraints on genus and dimensions; see work by Chen, De Pool, Franks, Handel in \cite{pure_braid_group}, \cite{Braid_group}, \cite{pure_MCG}, \cite{Triviality} for more details. In particular, Chen and Lanier \cite{CL} considered homomorphisms from $\mathrm{Mod}(S_g)$ to $\mathrm{Homeo}^+(\mathbb{S}^{n})$, and proved several results on the triviality of homomorphisms when $n$ is small by analyzing finite subgroups of $\mathrm{Mod}(S_g)$.

Although the corresponding result on higher-dimensional spheres remains unknown, there exist nontrivial homomorphisms from $\mathrm{Mod}(S_g)$ to $\mathrm{Homeo}^+(\mathbb{S}^{n})$ when $n$ is large enough. A simple example is the action ginve by the symplectic representation $\mathrm{Mod}(S_g) \to \mathrm{Sp}(2g,\mathbb{Z})$ and the action of $\mathrm{Sp}(2g,\mathbb{Z})$ on the oriented projective space of $\mathbb{R}^{2g}$. Zimmermann \cite{Symplectic_groups} proved that the minial $n$ such that there exists a nontrivial homomorhism $\mathrm{Sp}(2g,\mathbb{Z}) \to \mathrm{Homeo}^+(\mathbb{S}^{n})$ is $n=2g-1$. If we further assume that the homomorphism is injective, there is also a natural injective homomorphism from $\mathrm{Mod}(S_g)$ to $\mathrm{Homeo}^+(\mathbb{S}^{6g-7})$ given by the action of $\mathrm{Mod}(S_g)$ in $\mathcal{PMF}(S_g) \approx \mathbb{S}^{6g-7}$, the space of projective measured foliations in $S_g$.

In this paper, we give an injective homomorphism $\mathrm{Mod}(S_g) \to \mathrm{Homeo}^+(\mathbb{S}^{2g-1})$ by constructing a faithful action of $\mathrm{Mod}(S_g)$ on $\mathbb{S}^{2g-1}$ when $g \geq 3$. Our main result is the following.

\begin{theorem}\label{main}
For $g \geq 3$, there exists an injective homomorphism
\[
\mathrm{Mod}(S_g) \to \mathrm{Homeo}^+(\mathbb{S}^{2g-1}).
\]
\end{theorem}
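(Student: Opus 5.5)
The plan is to start from the action of $\mathrm{Sp}(2g,\mathbb{Z})$ on the oriented projective space of $\mathbb{R}^{2g}$, which is the sphere $\mathbb{S}^{2g-1}=(\mathbb{R}^{2g}\setminus 0)/\mathbb{R}_{>0}$. Composing with the symplectic representation $\Psi\colon \mathrm{Mod}(S_g)\to \mathrm{Sp}(2g,\mathbb{Z})$ gives an action of $\mathrm{Mod}(S_g)$ on $\mathbb{S}^{2g-1}$ whose kernel is exactly the Torelli group $\mathcal{I}(S_g)$. The task is then to modify this action near a suitable invariant set, without changing the topological type of the sphere, so that $\mathcal{I}(S_g)$ also acts faithfully. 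The abstract suggests the tools: an equivariant blow-up, together with the bi-orderability of $\mathcal{I}(S_g)$.

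\textbf{Step 1: a point with nonabelian stabilizer.} I will choose a primitive vector $v\in\mathbb{Z}^{2g}$, for instance the class of a nonseparating curve, and let $x=[v]\in \mathbb{S}^{2g-1}$. Its stabilizer is $\Gamma_x=\Psi^{-1}(\mathrm{Stab}(v))$. This group contains $\mathcal{I}(S_g)$, and the orbit of $x$ is discrete, since it is indexed by primitive integer vectors.

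\textbf{Step 2: equivariant blow-up.} At every point of the orbit $\mathrm{Mod}(S_g)\cdot x$ I will replace the point by a small sphere of directions (an oriented real blow-up), obtaining a manifold with boundary components. Gluing back a ball along each boundary component should again give $\mathbb{S}^{2g-1}$. The action extends to the blown-up space through the derivative action on directions, suitably made continuous. The point of the construction is the freedom in the glued-in balls: I will let $\Gamma_x$ act on the ball $B_x\cong D^{2g-1}$ by any action extending the prescribed boundary action, and transport this action to the rest of the orbit using coset representatives. This is the standard induced-action construction $\mathrm{Mod}(S_g)\times_{\Gamma_x} B_x$.

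\textbf{Step 3: a faithful Torelli action inside the ball.} Here I use bi-orderability. The group $\mathcal{I}(S_g)$ is countable and bi-orderable, in fact residually torsion-free nilpotent. Hence it embeds into $\mathrm{Homeo}^+(\mathbb{R})$, and the Torelli part can be made to act faithfully on an interval. Suspending this action along an interval direction inside $B_x$, and making it trivial near $\partial B_x$, produces an action of $\mathcal{I}(S_g)$ on $B_x$ that is faithful in the interior and the identity on the boundary.

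The main obstacle is extending this to all of $\Gamma_x$ compatibly. The group $\Gamma_x$ contains $\mathcal{I}(S_g)$ as a normal subgroup, and the interior action must intertwine with the conjugation action of $\Gamma_x/\mathcal{I}(S_g)\subset \mathrm{Stab}(v)$. The first thing I will try is to use an ordering of $\mathcal{I}(S_g)$ invariant under conjugation by $\Gamma_x$. Such an ordering would let me take the dynamical realization of a left order on $\Gamma_x$ whose restriction is the invariant bi-order. That requires $\Gamma_x$ to be left-orderable, which fails because of torsion. The workaround is to use the remaining $2g-2$ sphere dimensions of $B_x$ for the linear $\mathrm{Stab}(v)$-action, and the interval direction for $\mathcal{I}(S_g)$ via a conjugation-invariant bi-order. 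Residual nilpotence, through the Johnson filtration, should give such an invariant order, since $\mathrm{Mod}(S_g)$ acts on the graded pieces through $\mathrm{Sp}$.

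\textbf{Step 4: faithfulness.} Take $f\in\mathrm{Mod}(S_g)$. If $f\notin\mathcal{I}(S_g)$, then $f$ already acts nontrivially away from the blown-up balls, because $\Psi(f)\neq \pm 1$ acts nontrivially on an open set, and $-1$ is excluded since it reverses the chosen orientations. If $f\in\mathcal{I}(S_g)$ is nontrivial, it acts nontrivially in the interior of $B_x$. Finally, each piece of the action is orientation-preserving, and the hypothesis $g\ge 3$ enters where the Torelli group and the Johnson filtration behave well (for example, finite generation).
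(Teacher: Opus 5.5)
\textbf{There is a genuine gap, in Step~3}, at exactly the point you flag as the main obstacle. Because you blow up the orbit of a rational point $x=[v]$, each glued-in ball must carry an action of the full stabilizer $\Gamma_x=\Psi^{-1}(\mathrm{Stab}(v))$. That action has to restrict on $\partial B_x$ to the nontrivial derivative action of $\mathrm{Stab}(v)$ on $\mathbb{P}^+(\mathbb{R}^{2g}/\mathbb{R}v)$, while $\mathcal{I}(S_g)$ acts faithfully inside.

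First, no total order on $\mathcal{I}(S_g)$ is invariant under conjugation by $\Gamma_x$. If $\tau\in\Gamma_x$ has finite order $n$ and $c_\tau$ preserves the order, then $c_\tau(a)>a$ would give $a=c_\tau^{n}(a)>\cdots>c_\tau(a)>a$. So $c_\tau$ is the identity on $\mathcal{I}(S_g)$. But conjugation acts on $J_1/J_2\cong\wedge^3H/H$ (with $H=H_1(S_g;\mathbb{Z})$) through $\Psi$, and this $\mathrm{Sp}(2g,\mathbb{Z})$-module is faithful for $g\ge 3$. Hence anything centralizing $\mathcal{I}(S_g)$ lies in $\mathcal{I}(S_g)$. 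A nontrivial torsion element $\tau\in\Gamma_x$, which you correctly note exists, is not in the torsion-free group $\mathcal{I}(S_g)$. The same argument applied to $J_1/J_2$, on which $\Psi(\tau)\neq I$ acts nontrivially, shows that the graded pieces of the Johnson filtration carry no $\mathrm{Stab}(v)$-invariant orders either, contrary to your expectation.

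Second, and more decisively, your product scheme needs $\Gamma_x$ to act on the interval factor, i.e.\ a homomorphism $\phi\colon\Gamma_x\to\mathrm{Homeo}^+([0,1])$ injective on $\mathcal{I}(S_g)$. Its kernel $N$ is normal with $N\cap\mathcal{I}(S_g)=1$, so $[N,\mathcal{I}(S_g)]=1$. Then $N\subseteq\mathcal{I}(S_g)$, and so $N=1$. Thus $\Gamma_x$ would embed in $\mathrm{Homeo}^+([0,1])$ and be left-orderable, which torsion forbids.

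\textbf{The missing idea} is to choose the blown-up point so that this issue never arises. Take $x_0$ with \emph{trivial} $\mathrm{Sp}(2g,\mathbb{Z})$-stabilizer and dense orbit; such points exist generically (Lemma~\ref{dense_orbit}). Then the stabilizer in $\mathrm{Mod}(S_g)$ of each glued-in ball is exactly $\mathcal{I}(S_g)$, which acts trivially on its boundary. So any left order on $\mathcal{I}(S_g)$ gives $\theta\colon\mathcal{I}(S_g)\to\mathrm{Homeo}^+(D^{m},\partial D^{m})$, with no invariance or compatibility requirement. The action on the whole orbit of balls is then induced through the cocycle $k(g,r)=s(\pi(g)r)^{-1}gs(r)$ as in Theorem~\ref{extend}; this is your coset-representative construction.

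\textbf{Two smaller corrections.}
\begin{enumerate}
\item The orbit of $[v]$ is not discrete, since rational directions are dense in $\mathbb{S}^{2g-1}$. So Step~2 is already a dense-orbit blow-up. It needs the null-family control and Cannon's characterization of Sierpi\'nski curves to see that the result is a sphere.
\item $-I$ acts on $\mathbb{P}^+(\mathbb{R}^{2g})$ as the antipodal map, which is nontrivial and orientation-preserving on the odd-dimensional sphere $\mathbb{S}^{2g-1}$. So it is not ``excluded''; the right statement is that only $I$ acts trivially.
\end{enumerate}
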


Chen and Lanier \cite[Conjecture~9(2)]{CL} proposed their conjecture that the minimal dimension $n$ of a sphere such that there exists an injective homomorphism $\mathrm{Mod}(S_g) \to \mathrm{Homeo}^+(\mathbb{S}^{n})$ is $6g-7$ when $g \geq 3$. Thus, our Theorem \ref{main} disproves their conjecture and we have the following conjecture:

\begin{conjecture}
For $g \geq 3$:
\begin{enumerate}
    \item The minimal $n$ such that there exists a homomorphism
\[
\mathrm{Mod}(S_g) \to \mathrm{Homeo}^+(\mathbb{S}^{n})
\]
is $n=2g-1$.
    \item The minimal $n$ such that there exists an injective homomorphism
\[
\mathrm{Mod}(S_g) \to \mathrm{Homeo}^+(\mathbb{S}^{n})
\]
is $n=2g-1$.
\end{enumerate}
\end{conjecture}

Our strategy to construct the counterexample is as follows. We will use the symplectic representation of $\mathrm{Mod}(S_g)$
\[
1 \longrightarrow \mathcal{I}(S_g) \longrightarrow \mathrm{Mod}(S_g) \stackrel{\Psi}{\longrightarrow} \mathrm{Sp}(2g,\mathbb{Z}) \longrightarrow 1
\]
to extend the natural action of $\mathrm{Sp}(2g,\mathbb{Z})$ on $\mathbb{P}^+(\mathbb{R}^{2g}) \approx \mathbb{S}^{2g-1}$ to a faithful action of $\mathrm{Mod}(S_g)$ on $\mathbb{S}^{2g-1}$ when $g \geq 3$. The extension is realized in three steps. First, we apply the technique of equivariant blow-up to blow up a dense $\mathrm{Sp}(2g,\mathbb{Z})$-orbit in $\mathbb{S}^{2g-1}$ and lift the original action of $\mathrm{Sp}(2g,\mathbb{Z})$ to an equivariant action on the space obtained by blowing up; see Theorem \ref{Blow_up}. Second, we construct a faithful action of $\mathcal{I}(S_g)$ on a closed $(2g-1)$-ball using the fact that $\mathcal{I}(S_g)$ is a left-orderable group. Finally, we extend the action of $\mathrm{Sp}(2g,\mathbb{Z})$ constructed in the first step to an action of $\mathrm{Mod}(S_g)$ by making use of the structure of the equivariant blow-up and the faithful action on closed $(2g-1)$-balls constructed in the second step.

The main difficulty in producing this construction is to notice that $\mathcal{I}(S_g)$ is a left-orderable group. Although this fact is not hard to prove using the classic results established in the study of mapping class groups, it has not received much attention before. We give a proof of this fact based on the classic results on Johnson filtration. Once we observe this fact, it is natural to apply the characterization of left-orderable groups to produce the faithful action on closed $n$-balls. On the other hand, the technique of equivariant blow-up we used in the first step of construction is standard in geometry and topology. We prove it based on the original proof given by Bowden and Mann \cite{equivariant_blow-up}. With the establishment of the first two steps, the third step of construction is straightforward. In this paper, we state and prove all the results in the general case and then apply them to the case of $\mathrm{Mod}(S_g)$.

This paper is organized as follows. In Section \ref{mapping_class_group_preliminaries}, we recall some classic results in the study of mapping class groups. In particular, we prove that the Torelli group $\mathcal{I}(S_g)$ is left-orderable when $g \geq 3$; see Theorem \ref{torelli_bi_orderable}. In Section \ref{equivariant_blow_up}, we introduce the technique of equivariant blow-up. More precisely, for a group action by diffeomorphisms on a $n$-sphere and a dense orbit of it, we blow up each single point $x$ in the dense orbit by replacing it with a closed $n$-ball $D^n$, where $\partial D^n$ is identified with the oriented projective space of the tangent space at $x$. We show that the space obtained by blowing up is still a $n$-sphere, and the original action lifts to an equivariant action by homeomorphisms on it. See Theorem \ref{Blow_up} for more details. In Section \ref{faithful_action_on_closed_ball}, we construct a faithful action on $D^m$ for every left-orderable group; see Theorem \ref{act_faithfully_on_closed_disks}. In Section \ref{proof_of_main_thm}, we prove that given a short exact sequence
\[
1 \longrightarrow K \longrightarrow G \stackrel{\pi}{\longrightarrow} Q \longrightarrow 1
\]
and a faithful action of $K$ on a closed ball, one can lift an equivariant action of $Q$ as described in Theorem \ref{Blow_up} to an action of $G$. In the end of Section \ref{proof_of_main_thm}, we prove Theorem \ref{main} by applying this theorem and using the results established above.

\section{Mapping Class Group Preliminaries}\label{mapping_class_group_preliminaries}
We first recall some classic results on mapping class groups. We focus on the finite generation, the symplectic representation, and the Johnson filtration of $\mathrm{Mod}(S_g)$. In the end of this section, we prove that the Torelli group $\mathcal{I}(S_g)$ is bi-orderable for $g \geq 3$.

\subsection{Finite generation}

The mapping class group $\mathrm{Mod}(S_g)$ is finitely generated. More precisely, we have the following:

\begin{theorem}[Dehn–Lickorish theorem, Theorem 4.1, \cite{Primer}]
For $g \geq 0$, $\mathrm{Mod}(S_g)$ is generated by finitely many Dehn twists about nonseparating simple closed curves.
\end{theorem}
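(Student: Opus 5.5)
The plan is the standard Lickorish argument, organized by induction through the Birman exact sequence, using the connectivity of the complex of nonseparating curves. First I treat the trivial cases: $\mathrm{Mod}(S_0)$ and $\mathrm{Mod}(S_1)\cong \SL(2,\Z)$. The latter is generated by the twists $T_a,T_b$ about the meridian and longitude, which are nonseparating, since they act on $H_1$ as the two elementary matrices generating $\SL(2,\Z)$.

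For $g\ge 2$, I fix a finite collection of nonseparating curves, namely the Humphries curves $a_1,\dots,a_g$, $b_1,\dots,b_g$, $c_1,\dots,c_{g-1}$, and let $H$ be the subgroup generated by their twists. The first key step is to show that $H$ acts transitively on isotopy classes of nonseparating simple closed curves. The proof uses the connectivity of the nonseparating curve complex $\mathcal{N}(S_g)$ for $g\ge2$, in the version where two curves are joined when they intersect once. Together with the identity $T_aT_b(a)=b$ whenever $i(a,b)=1$, this lets one move a fixed curve $a_1$ along any path in $\mathcal{N}(S_g)$. Concretely, if $h(a_1)=\beta$ with $h\in H$ and $i(\beta,\gamma)=1$, then $h^{-1}(\gamma)$ meets $a_1$ once. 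One then shows that $h^{-1}(\gamma)$ can be brought to $b_1$ by an element of $H$ fixing $a_1$; this reduces to transitivity in a lower-complexity surface.

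The second key step reduces to the stabilizer. Given $f\in \mathrm{Mod}(S_g)$, transitivity provides $h\in H$ with $hf(a_1)=a_1$. Composing with $T_{a_1}T_{b_1}T_{a_1}$ if necessary (it reverses the orientation of $a_1$), I may assume $hf$ preserves $a_1$ with orientation. The oriented stabilizer of $a_1$ is then described via the cutting homomorphism to $\mathrm{Mod}(S_{g-1},\{p,q\})$, whose kernel is $\langle T_{a_1}\rangle$. Forgetting the two marked points in turn and applying the Birman exact sequence, the point-pushing subgroup is generated by pushes around loops. These pushes are products of twists about pairs of nonseparating curves, which after adjusting by $H$ can be taken among the chosen curves. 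Induction on genus then handles $\mathrm{Mod}(S_{g-1})$, lifted via curves disjoint from $a_1$.

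I expect the main obstacle to be the connectivity of $\mathcal{N}(S_g)$ and the bookkeeping showing that the point-pushing generators are products of twists about nonseparating curves in the fixed finite list. If this proves messy, a clean alternative is to invoke the fact that $\mathrm{Mod}(S_g)$ is generated by finitely many twists (including separating ones), for example via a finite presentation. One then writes each separating twist as a product of nonseparating twists using the chain or lantern relation, for instance $T_d=(T_{c_1}T_{c_2}T_{c_3})^4$ for a genus-one subsurface bounded by $d$, or the lantern relation with all four boundary curves nonseparating when $g\ge 3$.
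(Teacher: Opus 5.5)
The paper gives no argument of its own. It cites Farb--Margalit's Primer, whose proof has exactly your skeleton: connectivity of $\mathcal{N}(S_g)$, reduction to the stabilizer of a nonseparating curve, the cutting homomorphism with kernel $\langle T_a\rangle$, two Birman exact sequences with pushes written as $T_{\alpha_1}T_{\alpha_2}^{-1}$, and induction on genus. The gap is in how you connect these pieces. You fix a finite list in advance (incidentally, $a_1,\dots,a_g,b_1,\dots,b_g,c_1,\dots,c_{g-1}$ are Lickorish's $3g-1$ curves, not Humphries' $2g+1$). Your first step, transitivity of $H$, then rests entirely on the assertion that $h^{-1}(\gamma)$ ``can be brought to $b_1$ by an element of $H$ fixing $a_1$''. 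Since $h^{-1}(\gamma)$ can be any curve meeting $a_1$ once, this says that $H\cap\mathrm{Stab}(a_1)$ is transitive on such curves. Equivalently, up to powers of $T_{a_1}$, it is transitive on arcs joining the two punctures of the cut surface $S_{g-1,2}$. Your induction hypothesis concerns only the closed surface $S_{g-1}$ and says nothing about this. The natural way to get it is to show $\mathrm{Stab}(a_1)\subseteq H$, which is what your second step is supposed to prove. But the second step uses the first twice: to find $h\in H$ with $hf(a_1)=a_1$, and to move the push curves and lifted curves ``by $H$'' onto your list. So the argument is circular as written. The missing piece is exactly what makes generation by a \emph{prescribed} set a harder theorem than the one stated.

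Since only \emph{some} finite set is required, do not fix it in advance. Take $a,b$ with $i(a,b)=1$. $\mathcal{N}(S_g)$ is connected, and $\mathrm{Mod}(S_g)$ is transitive on its vertices and on ordered pairs of curves meeting once (change of coordinates). Induction on distance from $a$, using $T_aT_b(a)=b$, then shows that $\langle T_aT_b,\mathrm{Stab}(a)\rangle$ is transitive on vertices, hence equals $\mathrm{Mod}(S_g)$. Next, $\mathrm{Stab}(a)=\langle\mathrm{Stab}(\vec a),(T_aT_bT_a)^2\rangle$. Note that $T_aT_bT_a$ itself sends $a$ to $b$, so your orientation-reversing element should be its square. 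Your Birman-sequence argument then directly gives finitely many nonseparating twists generating $\mathrm{Stab}(\vec a)$:
\begin{itemize}
\item $T_a$;
\item the pairs $T_{\alpha_1},T_{\alpha_2}$ for pushes along the standard nonseparating simple loops;
\item lifts, disjoint from $a$, of the inductive generators of $\mathrm{Mod}(S_{g-1})$.
\end{itemize}
For $g=2$, replace the second Birman sequence by the isomorphism $\mathrm{Mod}(S_{1,1})\cong\mathrm{Mod}(S_1)$. No transitivity of a prescribed subgroup is needed. Your fallback does not help: finite generation by twists (let alone a finite presentation) is essentially the theorem being proved. The relation you quote is also wrong. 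A $3$-chain has a two-holed torus as neighborhood and gives $(T_{c_1}T_{c_2}T_{c_3})^4=T_{d_1}T_{d_2}$, while a one-holed torus bounded by $d$ requires the $2$-chain relation $(T_{c_1}T_{c_2})^6=T_d$.
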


In particular, when $g \geq 2$, $\mathrm{Mod}(S_g)$ can be generated by $3g-1$ elements called the \textit{Lickorish generators}, or by $2g+1$ elements called the \textit{Humphries generators}. See Theorem 4.13, Theorem 4.14 in \cite{Primer} for a detailed proof.

\subsection{Symplectic representation}

A important way to study the property of $\mathrm{Mod}(S_g)$ is by considering its action on $H_1(S_g;\mathbb{Z}) \cong \mathbb{Z}^{2g}$, because we can understand $\mathrm{Mod}(S_g)$ through the well-developed theory on the linear group $\mathrm{Aut}(H_1(S_g;\mathbb{Z}))$.

Let
\[
\hat{i} \colon H_1(S_g;\mathbb{Z}) \wedge H_1(S_g;\mathbb{Z}) \to \mathbb{Z}
\]
be the algebraic intersection number. It extends uniquely to a nondegenerate, alternating bilinear map
\[
\hat{i} \colon H_1(S_g;\mathbb{R}) \wedge H_1(S_g;\mathbb{R}) \to \mathbb{R}.
\]
Then $(H_1(S_g;\mathbb{R}),\hat{i})$ is a symplectic vector space. Since $\mathrm{Mod}(S_g)$ preserves $\hat{i}$ when acting on $H_1(S_g;\mathbb{Z})$, we have a representation $\Psi \colon \mathrm{Mod}(S_g) \to \mathrm{Sp}(2g,\mathbb{Z})$ called the \textit{symplectic representation} of $\mathrm{Mod}(S_g)$.

One of the most important properties of the symplectic representation is its surjectivity. See \cite{Primer} for three different proofs of the following theorem.

\begin{theorem}[Theorem 6.4, \cite{Primer}]
The representation $\Psi:\mathrm{Mod}(S_g) \to \mathrm{Sp}(2g,\mathbb{Z})$ is surjective when $g \geq 1$.
\end{theorem}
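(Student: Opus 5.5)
The plan is to prove surjectivity by showing that the image of $\Psi$ contains a known generating set of $\mathrm{Sp}(2g,\mathbb{Z})$. Fix a symplectic basis $a_1,b_1,\dots,a_g,b_g$ of $H_1(S_g;\mathbb{Z})$, represented by the standard simple closed curves $\alpha_i,\beta_i$ with $\hat{i}(a_i,b_j)=\delta_{ij}$ and all other pairings zero. The basic computation is the action of a Dehn twist on homology. For a simple closed curve $c$ with class $[c]$, we have the transvection formula
\[
\Psi(T_c)(x) = x + \hat{i}(x,[c])\,[c],
\]
up to a global sign convention. We verify this by looking at a representative of $x$ that crosses $c$ transversely, and noting that each crossing picks up a copy of $\pm[c]$. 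Consequently, the image of $\Psi$ contains every symplectic transvection $x\mapsto x+\hat{i}(x,v)v$, where $v$ is the class of a simple closed curve. By the classical fact that every primitive vector of $\mathbb{Z}^{2g}$ is represented by an oriented simple closed curve, this gives all transvections along primitive vectors.

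Next we invoke an algebraic generation result for $\mathrm{Sp}(2g,\mathbb{Z})$. To avoid citing it as a black box, we argue directly. Let $\Gamma$ be the subgroup generated by the transvections along $a_i$, $b_i$, $a_i+a_{i+1}$ and $b_i-b_{i+1}$ (or similar classes of the Humphries/Lickorish curves). Then $\Gamma$ lies in the image of $\Psi$.

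We show $\Gamma$ acts transitively on primitive vectors, by a Euclidean-algorithm reduction.
\begin{enumerate}
\item Within each pair $(a_i,b_i)$, the transvections $T_{a_i},T_{b_i}$ generate a copy of $\mathrm{SL}(2,\mathbb{Z})$. This lets us reduce the $(a_i,b_i)$-coordinates of a vector to $(\gcd,0)$.
\item The transvections mixing adjacent pairs then let us combine the gcds across the pairs.
\item Repeating, any primitive vector is carried to $a_1$.
\end{enumerate}

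Then we induct on $g$. Let $A\in\mathrm{Sp}(2g,\mathbb{Z})$. First compose with an element of $\Gamma$ so that $A(a_1)=a_1$. Then $A(b_1)=b_1+\lambda a_1+w$ with $w$ in the span of the other pairs. Further transvections fixing $a_1$ (along $a_1$, and along $a_1+u$ for suitable $u$) fix $a_1$ and let us correct $A(b_1)$ to $b_1$. Now $A$ preserves the symplectic complement of $\langle a_1,b_1\rangle$, which is $\mathrm{Sp}(2g-2,\mathbb{Z})$. The base case $g=1$ is the statement that $\mathrm{SL}(2,\mathbb{Z})$ is generated by $\begin{pmatrix}1&1\\0&1\end{pmatrix}$ and $\begin{pmatrix}1&0\\1&1\end{pmatrix}$, which are $\Psi$ of the twists about the meridian and longitude.

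The main obstacle is the transitivity/stabilizer bookkeeping. Specifically, we must check that the stabilizer of $a_1$ is generated modulo $\mathrm{Sp}(2g-2,\mathbb{Z})$ by transvections that really lie in $\Gamma$, i.e.\ that the needed classes $a_1+u$ are represented by simple closed curves. For this we lean on the fact that primitive classes are represented by simple curves, so no explicit curve drawing is required.
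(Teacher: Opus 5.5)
Your proof is correct. The paper gives no argument of its own here; it only cites \cite{Primer}. What you have written is the standard transvection proof: $\Psi(T_c)$ is the transvection along $[c]$, every primitive class is carried by a simple closed curve, and primitive transvections generate $\mathrm{Sp}(2g,\mathbb{Z})$. The one difference is that you prove the last, purely algebraic fact yourself (Euclidean-algorithm transitivity plus induction on $g$) rather than quoting it.

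Two pieces of bookkeeping should be tightened.

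First, the classes $a_1+w$ you need are not visibly among your chosen generators of $\Gamma$. The simple-closed-curve fact only places $T_{a_1+w}$ in $\operatorname{im}\Psi$, which is all you actually need. So work throughout in the group $\Gamma'$ generated by all transvections along primitive vectors. Since $\Gamma\subseteq\Gamma'\subseteq\operatorname{im}\Psi$, the group $\Gamma'$ is still transitive on primitive vectors. Then take as your inductive statement the algebraic claim $\Gamma'=\mathrm{Sp}(2g,\mathbb{Z})$. That way the hypothesis applies to the restriction of $A$ to $\langle a_2,b_2,\dots,a_g,b_g\rangle$, and primitive transvections of that sublattice extend to transvections of $\mathbb{Z}^{2g}$ fixing $a_1$ and $b_1$.

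Second, the two steps you left as sketches are easy to make explicit with $T_v(x)=x+\hat{i}(x,v)v$ and your convention $\hat{i}(a_1,b_1)=1$.
\begin{itemize}
\item If $A(a_1)=a_1$ and $A(b_1)=b_1+\lambda a_1+w$, then $T_{a_1}^{-1}T_{a_1+w}T_{a_1}^{\lambda}$ fixes $a_1$ and sends $A(b_1)$ to $b_1$. Here $a_1+w$ is primitive because its $a_1$-coefficient is $1$.
\item In the Euclidean step, $T_{a_{i+1}}^{-1}T_{a_i+a_{i+1}}$ sends $d_ia_i+d_{i+1}b_{i+1}$ to $(d_i-d_{i+1})a_i+d_{i+1}b_{i+1}$. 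The mirror move $T_{a_i}^{-1}T_{a_i+a_{i+1}}$ sends $d_ib_i+d_{i+1}a_{i+1}$ to $d_ib_i+(d_{i+1}-d_i)a_{i+1}$. Together with the $\mathrm{SL}(2,\mathbb{Z})$ moves inside each pair, these run Euclid on $(d_i,d_{i+1})$.
\end{itemize}
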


The kernel of $\Psi$ is called the \textit{Torelli group}, denoted $\mathcal{I}(S_g)$. Then we have a short exact sequence
\[
1 \longrightarrow \mathcal{I}(S_g) \longrightarrow \mathrm{Mod}(S_g) \stackrel{\Psi}{\longrightarrow} \mathrm{Sp}(2g,\mathbb{Z}) \longrightarrow 1.
\]

\subsection{Johnson filtration}
Let $g \geq 2$ and $\Gamma = \pi_1(S_g)$. We take the lower central series of $\Gamma$
\[
\Gamma = \Gamma_1 \supset \Gamma_2 \supset \cdots
\]
defined inductively by
\[
\Gamma_1 = \Gamma \quad \text{and} \quad \Gamma_i = [\Gamma,\Gamma_{i-1}].
\]
There exists a natural homomorphism
\[
\Psi_i \colon \mathrm{Mod}(S_g) \to \mathrm{Out}(\Gamma/\Gamma_{i+1}).
\]
Define the $i^{\text{th}}$ Torelli group as $\mathcal{I}^i(S_g) = \mathrm{ker}(\Psi_i)$. This gives a filtration $\{\mathcal{I}^i(S_g)\}_{i=1}^\infty$ of $\mathcal{I}(S_g)$, called the \textit{Johnson filtration}. The Johnson filtration is also denoted $\{J_i\}_{i=1}^\infty$. Magnus \cite{Lower_central_series} proved that the intersection $\bigcap_{i=1}^{\infty} \Gamma_i$ is trivial. Then Bass and Lubotzky \cite{Johnson_filtration} proved that the intersection $\bigcap_{i=1}^{\infty} \mathcal{I}^i(S_g)$ is also trivial. More precisely, we have the following:

\begin{theorem}[Bass and Lubotzky, \cite{Johnson_filtration}]\label{torsion-free}
Let $S$ be a closed orientable surface with genus $g$ and $n$ marked points. Then the Johnson filtration $\{J_i\}_{i=1}^\infty$ on $\mathcal{I}(S)$ satisfies:
\begin{enumerate}
    \item $\bigcap_{i=1}^{\infty} J_i \subset Z(\mathrm{Mod}(S))$.
    \item $J_i/J_{i+1}$ is torsion-free for every $i \geq 1$.
\end{enumerate}
\end{theorem}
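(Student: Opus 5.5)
The plan is to prove the two parts separately. Part (2) comes from the Johnson homomorphisms. Part (1) comes from a separability property of surface groups combined with the fact that a mapping class fixing every isotopy class of simple closed curves is central. Throughout, write $L_k=\Gamma_k/\Gamma_{k+1}$ and $H=L_1=H_1(S)$. I take from Labute (closed surfaces) and Magnus/Witt (free groups, for the punctured case) the fact that each $L_k$ is a finitely generated free abelian group. I also use that $L=\bigoplus_k L_k$ is a graded Lie algebra under the commutator bracket.

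For part (2), I will define the $i$-th Johnson homomorphism on $J_i$. Take $f\in J_i$ and a lift $\varphi\in\mathrm{Aut}(\Gamma)$ that is trivial on $\Gamma/\Gamma_{i+1}$; such a lift exists after composing with an inner automorphism. Set
\[
\tau_i(f)\colon H\to L_{i+1},\qquad [x]\mapsto \varphi(x)x^{-1}\Gamma_{i+2}.
\]
The steps are as follows.
\begin{enumerate}
\item Check that $\tau_i(f)$ is a well-defined homomorphism $H\to L_{i+1}$ and that $\varphi\mapsto\tau_i$ is multiplicative.
\item Changing the lift $\varphi$ by conjugation by $\gamma\in\Gamma_i$ alters $\tau_i(f)$ by $\mathrm{ad}(\bar\gamma)$, where $\bar\gamma\in L_i$. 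So $\tau_i$ descends to an injective homomorphism
\[
J_i/J_{i+1}\hookrightarrow \mathrm{Hom}(H,L_{i+1})/\mathrm{ad}(L_i).
\]
\item Show that this target is torsion-free.
\end{enumerate}

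Step 3 is the main obstacle. The group $\mathrm{Hom}(H,L_{i+1})$ is free abelian, so it suffices to show that $\mathrm{ad}(L_i)$ is a saturated subgroup. Concretely: if $n\,D=\mathrm{ad}(y)$, I must produce $y'$ with $y=ny'$ and $D=\mathrm{ad}(y')$. I would argue this as follows.
\begin{itemize}
\item Pass to $L\otimes\mathbb{Q}$ and to $L\otimes\mathbb{F}_p$ for each prime $p$.
\item Use that the centralizer of $H$ in these graded Lie algebras is trivial in degrees $\ge 2$ (and in degree $1$ when $g\ge2$). This follows from the explicit one-relator presentation $\sum[a_j,b_j]=0$ and the Hilbert series computation of Labute, which remain valid mod $p$.
\item Deduce that $\mathrm{ad}\colon L_i\otimes\mathbb{F}_p\to \mathrm{Hom}(H,L_{i+1})\otimes\mathbb{F}_p$ is injective for every $p$. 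This is exactly the statement that the cokernel of $\mathrm{ad}$ has no $p$-torsion.
\end{itemize}
The punctured case is analogous and in fact easier, since $\Gamma$ is free and the free Lie algebra is centerless over every field.

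For part (1), let $f\in\bigcap_i J_i$ and let $c$ be any conjugacy class in $\Gamma$. For every $i$, $f$ acts on $\Gamma/\Gamma_{i+1}$ by an inner automorphism, so $f(c)$ and $c$ have conjugate images in every nilpotent quotient $\Gamma/\Gamma_{i+1}$. I would then invoke the fact that surface groups are conjugacy separable with respect to their lower central series. This can be obtained from Paris's result that surface groups are conjugacy $p$-separable, since finite $p$-quotients are nilpotent and factor through some $\Gamma/\Gamma_{i+1}$. It gives $f(c)=c$ for every conjugacy class $c$. In particular $f$ fixes the isotopy class of every simple closed curve, together with its orientation.

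The standard argument then finishes part (1). A mapping class fixing every isotopy class of simple closed curve commutes with every Dehn twist, because $fT_af^{-1}=T_{f(a)}=T_a$. By the Dehn–Lickorish theorem these twists generate $\mathrm{Mod}(S)$, so $f\in Z(\mathrm{Mod}(S))$. This center is trivial, or generated by the hyperelliptic involution in the small exceptional cases. Apart from Step 3 above, the step I would check most carefully is the passage from conjugacy in all nilpotent quotients to genuine conjugacy, since everything in part (1) rests on that separability input.
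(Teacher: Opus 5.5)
The paper does not prove this statement; it quotes it from Bass--Lubotzky and uses it only as an input to the bi-orderability of $\mathcal{I}(S_g)$. Your argument is therefore an independent route, and its architecture is sound.

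\emph{Part (2).} The embedding $J_i/J_{i+1}\hookrightarrow \mathrm{Hom}(H,L_{i+1})/\mathrm{ad}(L_i)$ is correct. So is the reduction of torsion-freeness to injectivity of $\mathrm{ad}\otimes\mathbb{F}_p$ for every prime $p$: the Tor sequence identifies the $p$-torsion of the cokernel with $\ker(\mathrm{ad}\otimes\mathbb{F}_p)$.

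\emph{Part (1).} Paris's conjugacy $p$-separability of free and surface groups does give conjugacy separability through the quotients $\Gamma/\Gamma_k$, since every finite $p$-quotient factors through one of them. The twist argument then gives centrality.

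Compared with the citation, your route is more transparent and gives a slightly stronger conclusion. Elements of $\bigcap_i J_i$ fix every \emph{oriented} simple closed curve. So for closed surfaces they are trivial outright, since the hyperelliptic involution reverses orientations of nonseparating curves. Triviality is exactly what the paper needs.

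Three places need more than you wrote.

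First, centerlessness of $L\otimes\mathbb{F}_p=\mathcal{L}_{\mathbb{F}_p}(a_1,\dots,b_g)/(\sum_j[a_j,b_j])$ does not follow from Labute's Hilbert series. Knowing the dimensions of $L_i\otimes\mathbb{F}_p$ and $L_{i+1}\otimes\mathbb{F}_p$ says nothing about the kernel of a particular map between them. Here is an argument valid over any field $k$.
\begin{enumerate}
\item Send $a_i$ to the class of $e_i$ and $b_j$ to $b_j$. Comparing universal properties, this gives an isomorphism $\mathfrak{g}_k\cong\mathcal{L}_k(M)\rtimes\mathcal{L}_k(b_1,\dots,b_g)$. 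Here $M=k\langle b_1,\dots,b_g\rangle^{g}/k\langle b_1,\dots,b_g\rangle\cdot\sum_j b_je_j$, and $\mathcal{L}_k(b)$ acts on $\mathcal{L}_k(M)$ by derivations extending the module action.
\item For $g\geq 2$ both factors are free Lie algebras of rank at least $2$, hence centerless. To see this, embed them in the tensor algebra, where the centralizer of a generator $x$ is $k[x]$.
\item A homogeneous central element of $\mathfrak{g}_k$ projects to a central element of $\mathcal{L}_k(b)$, hence lies in $\mathcal{L}_k(M)$, hence is $0$.
\end{enumerate}

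Second, in Step 2 two normalized lifts of $f$ differ by $c_\gamma$ with $\bar\gamma$ central in $\Gamma/\Gamma_{i+1}$. A priori you do not know $\gamma\in\Gamma_i$. You need $Z(\Gamma/\Gamma_{i+1})=\Gamma_i/\Gamma_{i+1}$. This follows from the same centerlessness in degrees $<i$, taken over $\mathbb{Z}$; the integral statement follows from the mod $p$ one because the $L_k$ are free abelian.

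Third, with marked points Dehn twists generate only the pure mapping class group. To get centrality in the full group, note that $f$ fixes every peripheral class, so it is pure. It also commutes with every half-twist, because a half-twist is determined by the curve bounding its twice-marked disk.
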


\subsection{Bi-orderablility of the Torelli group}

The properties of left-orderability and bi-orderability show up in this subsection because they are naturally related to dynamics and produce the faithful actions of $\mathcal{I}(S_g)$ we need; see Theorem \ref{act_faithfully_on_closed_disks}. We first recall the definition of left-orderable groups and bi-orderable groups.

\begin{definition}
A group $G$ is \textit{left-orderable} if there exists a total order $<$ on $G$ that is invariant under left multiplication, i.e. for every $x,y,g \in G$, if $x<y$ then $gx<gy$. Similarly, we can define \textit{right-orderable} groups. 
\end{definition}

\begin{definition}
A group $G$ is called \textit{bi-orderable} if there exists a total order $<$ on $G$ that is invariant under both left and right multiplication, i.e. for every $x,y,g \in G$, if $x<y$ then $gx<gy$ and $xg<yg$.
\end{definition}

One may refer to Fuchs's book \cite{Partially_ordered_algebraic_systems} to see more examples and properties of left-orderable and bi-orderable groups. In particular, we point out the following:

\begin{lemma}\label{torsion-free_abelian_bi-orderable}
A torsion-free abelian group $G$ is bi-orderable.
\end{lemma}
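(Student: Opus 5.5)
Torsion-free abelian groups embed into $\mathbb{Q}$-vector spaces, so the plan is to order the vector space lexicographically and restrict. Since $G$ is abelian, left and right multiplication agree, so it suffices to produce a total order invariant under translation.

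\emph{Step 1: reduction to $\mathbb{Q}$-vector spaces.} Consider the natural map $G \to G \otimes_{\mathbb{Z}} \mathbb{Q}$, $x \mapsto x \otimes 1$. Its kernel consists of torsion elements: $x \otimes 1 = 0$ iff $nx = 0$ for some $n \ge 1$, using that localization is exact. Since $G$ is torsion-free, this map is an injective homomorphism. A restriction of a translation-invariant total order to a subgroup is still translation-invariant and total, so it suffices to order $V = G \otimes \mathbb{Q}$.

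\emph{Step 2: order $V$.} Choose a basis $\{e_\alpha\}_{\alpha \in A}$ of $V$ via Zorn's lemma, and well-order the index set $A$ (well-ordering theorem). Every nonzero $v \in V$ has a finite expansion $v = \sum q_\alpha e_\alpha$. Let $\alpha(v)$ be the least index with $q_\alpha \neq 0$. Declare $v > 0$ iff $q_{\alpha(v)} > 0$ in $\mathbb{Q}$, and set $u < w$ iff $w - u > 0$.

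\emph{Step 3: verify the order axioms.} The positive cone $P$ satisfies $P + P \subset P$. Indeed, if $v, w \in P$ with leading indices $\alpha \le \beta$, then the leading index of $v + w$ is $\alpha$, and its coefficient there is either $q_\alpha(v) > 0$ or $q_\alpha(v) + q_\alpha(w) > 0$. Moreover, $V = P \sqcup \{0\} \sqcup (-P)$. These two facts give transitivity, trichotomy, and invariance under adding any $g$, since $(w+g) - (u+g) = w - u$.

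No step is a serious obstacle. The only point needing care is the injectivity $G \to G \otimes \mathbb{Q}$, which uses torsion-freeness. If one wishes to avoid the choice-based basis argument, an alternative is to order finitely generated subgroups $\mathbb{Z}^n$ lexicographically and pass to the limit by a compactness argument on the space of orders.
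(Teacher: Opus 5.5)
Your proposal is correct and takes the same approach as the paper: embed $G$ into $V = G \otimes_{\mathbb{Z}} \mathbb{Q}$ using torsion-freeness, order $V$ lexicographically with respect to a basis whose index set has been ordered via choice, and restrict the order to $G$. Your positive-cone check fills in the paper's ``one can check directly from definition''; note also that any total order on the index set would do, since each vector has only finitely many nonzero coordinates.
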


\begin{proof}
Since $G$ is torsion-free, we have a natural injective homomorphism $G \to V = G \otimes_{\mathbb{Z}} \mathbb{Q}$. By the Axiom of Choice, there exists an ordered basis of $V$. This gives a lexicographic order on $V$, which restricts to $G$. One can check directly from definition that this order is invariant under multiplication.
\end{proof}

\begin{theorem}\label{torelli_bi_orderable}
Let $g \geq 3$. Then $\mathcal{I}(S_g)$ is bi-orderable. In particular, it is left-orderable.
\end{theorem}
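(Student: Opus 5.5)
The plan is to build a bi-invariant order on $\mathcal{I}(S_g)$ lexicographically from the Johnson filtration $\{J_i\}$, with $J_1 = \mathcal{I}(S_g)$. We will use Theorem \ref{torsion-free} and Lemma \ref{torsion-free_abelian_bi-orderable}.

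\textbf{Step 1: the filtration separates points.} For $g \geq 3$ (indeed $g\ge 2$) the center $Z(\mathrm{Mod}(S_g))$ is trivial. This is a classical fact: the hyperelliptic involution is central only for $g\le 2$. By part (1) of Theorem \ref{torsion-free}, it follows that $\bigcap_i J_i$ is trivial. Hence every nontrivial $f \in \mathcal{I}(S_g)$ has a well-defined level $i(f)$, the unique $i$ with $f \in J_i \setminus J_{i+1}$.

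\textbf{Step 2: structure of the graded pieces.} We will check that $[J_i, J_j] \subset J_{i+j}$ and that each $J_i$ is normal in $\mathrm{Mod}(S_g)$. Normality holds because $J_i$ is the kernel of a homomorphism. The commutator inclusion is the standard property of the Johnson filtration: an element of $J_i$ acts on $\Gamma$ by $\gamma \mapsto \gamma\, c_\gamma$ with $c_\gamma \in \Gamma_{i+1}$ modulo inner automorphisms, and one expands the commutator. In particular $[J_1,J_i]\subset J_{i+1}$, so $J_i/J_{i+1}$ is abelian and central in $J_1/J_{i+1}$. By part (2) of Theorem \ref{torsion-free} it is torsion-free. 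Lemma \ref{torsion-free_abelian_bi-orderable} then gives a bi-invariant order $<_i$ on $J_i/J_{i+1}$.

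\textbf{Step 3: the order.} Declare $f > 1$ if $f \neq 1$ and $f J_{i+1} >_i J_{i+1}$ in $J_i/J_{i+1}$, where $i = i(f)$. Set $f<h$ iff $f^{-1}h>1$. We verify three things.
\begin{itemize}[label={}]
\item \emph{Trichotomy.} Exactly one of $f>1$, $f=1$, $f^{-1}>1$ holds, since $i(f^{-1})=i(f)$ and $<_i$ is a total order on a group.
\item \emph{Closure under products.} The positive cone is closed under products. If $i(f)<i(h)$, then $fh \equiv f \bmod J_{i(f)+1}$. If the levels are equal, the image of $fh$ is the product of two positive elements of the ordered abelian group, which is positive and in particular nonzero, so $i(fh)$ equals that level.
\item \emph{Conjugation invariance.} The positive cone is invariant under conjugation by $\mathcal{I}(S_g)$. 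Conjugation by $k\in J_1$ preserves levels, and it acts trivially on $J_i/J_{i+1}$ because $[J_1,J_i]\subset J_{i+1}$.
\end{itemize}
A conjugation-invariant positive cone giving a total order is exactly a bi-order, so $\mathcal{I}(S_g)$ is bi-orderable. Bi-orderability trivially implies left-orderability.

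\textbf{Main obstacle.} The main point to pin down is Step 2's commutator relation $[J_1,J_i]\subset J_{i+1}$. It is what makes conjugation act trivially on the graded pieces, and without it the lexicographic cone need not be conjugation-invariant. We would cite it from the Johnson filtration literature, or argue directly via the action on $\Gamma/\Gamma_{k}$ using $[\Gamma_a,\Gamma_b]\subset\Gamma_{a+b}$. A fallback is available if only left-orderability is needed: the lexicographic construction still gives a left order provided each $J_i/J_{i+1}$ is merely left-orderable. A secondary point is confirming that $Z(\mathrm{Mod}(S_g))=1$ for $g\ge3$, which is standard (Primer).
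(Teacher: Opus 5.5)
Your proposal is correct and follows essentially the same route as the paper. Both proofs use triviality of $\bigcap_i J_i$ (from Bass--Lubotzky plus the trivial center), bi-orders on the torsion-free abelian graded pieces $J_i/J_{i+1}$, the lexicographic positive cone, and conjugation invariance from $[J_1,J_i]\subseteq J_{i+1}$; the paper proves that last inclusion as a separate lemma by exactly the expansion you sketch, and your explicit check that the cone is closed under products is more careful than the paper's text.

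One small slip: the parenthetical ``indeed $g\ge 2$'' is false, since $Z(\mathrm{Mod}(S_2))\cong\mathbb{Z}/2$ is generated by the hyperelliptic involution, as your own next sentence says. This is irrelevant for $g\ge 3$, and in any case that involution acts by $-I$ on $H_1(S_2;\mathbb{Z})$ and so does not lie in the Torelli group.
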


Our strategy to prove this theorem is to construct an order on $\mathcal{I}(S_g)$ and check that it is invariant under multiplication. Note that Theorem \ref{torsion-free} show that $J_i/J_{i+1}$ is torsion-free. Thus, we can apply Lemma \ref{torsion-free_abelian_bi-orderable} to $J_i/J_{i+1}$. Then we make use of the Johnson filtration to construct a total order on $\mathcal{I}(S_g)$ based on the order on $J_i/J_{i+1}$.

Recall that we have $[\Gamma_i,\Gamma_j] \subseteq \Gamma_{i+j}$ for every $i,j \geq 1$. Before we prove Theorem \ref{torelli_bi_orderable}, we first prove that the Johnson filtration also has the similar property in the following lemma.

\begin{lemma}
Let $g \geq 3$, and $\{J_i\}_{i=1}^\infty$ be the Johnson filtration of $\mathcal{I}(S_g)$. Then $[J_i,J_j] \subseteq J_{i+j}$ for every $i,j \geq 1$.
\end{lemma}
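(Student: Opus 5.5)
Because $\mathrm{Out}$ only sees automorphisms up to conjugation, I will work with genuine automorphisms rather than outer classes. The first step is to replace $J_i$ by an equivalent description. For $\phi \in \mathrm{Mod}(S_g)$, choose a lift $\tilde\phi \in \mathrm{Aut}(\Gamma)$. Then $\phi \in J_i$ exactly when some lift satisfies $\tilde\phi(x)x^{-1} \in \Gamma_{i+1}$ for all $x \in \Gamma$, that is, when $\tilde\phi$ acts trivially on $\Gamma/\Gamma_{i+1}$. The forward direction uses that an inner automorphism of $\Gamma/\Gamma_{i+1}$ lifts to an inner automorphism of $\Gamma$. So if $\tilde\phi$ induces conjugation by $\bar c$ on $\Gamma/\Gamma_{i+1}$, I compose $\tilde\phi$ with conjugation by $c^{-1}$ to obtain a lift that acts trivially. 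Accordingly, I set
\[
A_i=\{\alpha\in\mathrm{Aut}(\Gamma) : \alpha(x)x^{-1}\in\Gamma_{i+1}\ \text{for all } x\in\Gamma\},
\]
so that $J_i$ is the image of $A_i$ in $\mathrm{Out}(\Gamma)\supset \mathrm{Mod}(S_g)$. It then suffices to show $[A_i,A_j]\subseteq A_{i+j}$, because commutators of lifts are lifts of commutators.

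This is the classical Andreadakis-type statement, which I will prove by induction. The first thing to establish is a stronger auxiliary claim: for $\alpha\in A_i$ and all $k\ge 1$, we have $\alpha(y)y^{-1}\in\Gamma_{i+k}$ for every $y\in\Gamma_k$. I argue by induction on $k$. The case $k=1$ is the definition of $A_i$. For the inductive step, $\Gamma_{k+1}$ is generated by commutators $[x,y]$ with $x\in\Gamma$ and $y\in\Gamma_k$. Write $\alpha(x)=ax$ with $a\in\Gamma_{i+1}$ and $\alpha(y)=by$ with $b\in\Gamma_{i+k}$. Expanding $[ax,by]$ with the commutator identities, together with $[\Gamma_m,\Gamma_n]\subseteq\Gamma_{m+n}$, gives $[ax,by]\equiv[x,y]$ modulo $\Gamma_{i+k+1}$. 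Finally, the set of $y$ with $\alpha(y)y^{-1}\in\Gamma_{i+k+1}$ is closed under products and inverses, which is checked using normality of $\Gamma_{i+k+1}$, so the claim holds on all of $\Gamma_{k+1}$.

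Now take $\alpha\in A_i$ and $\beta\in A_j$, and write $\alpha(x)=ax$ and $\beta(x)=bx$ with $a\in\Gamma_{i+1}$ and $b\in\Gamma_{j+1}$. I will compute $\alpha\beta(x)$ and $\beta\alpha(x)$ modulo $\Gamma_{i+j+1}$:
\[
\alpha\beta(x)=\alpha(b)\,\alpha(x)=\alpha(b)\,a\,x,
\qquad
\beta\alpha(x)=\beta(a)\,b\,x.
\]
By the auxiliary claim, $\alpha(b)\equiv b$ modulo $\Gamma_{i+j+1}$, and likewise $\beta(a)\equiv a$. Since $[a,b]\in\Gamma_{i+j+2}$, the elements $a$ and $b$ commute modulo $\Gamma_{i+j+1}$. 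Hence $\alpha\beta(x)\equiv\beta\alpha(x)$ modulo $\Gamma_{i+j+1}$ for every $x$. Applying this to $x=(\beta\alpha)^{-1}(z)$ shows that $[\alpha,\beta]$ moves every $z$ only by an element of $\Gamma_{i+j+1}$, so $[\alpha,\beta]\in A_{i+j}$.

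I expect the main obstacle to be this last translation, together with the lifting step in the first paragraph. Both require tracking carefully that the congruences are in normal subgroups, so that rewriting $\alpha\beta(x)(\beta\alpha(x))^{-1}$ is legitimate. The hypothesis $g\ge 3$ plays no role beyond the standing assumption $g\ge 2$ needed to define the filtration.
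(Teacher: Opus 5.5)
Your proof is correct and follows the paper's argument. You normalize lifts so they act trivially on $\Gamma/\Gamma_{i+1}$, show by induction on $k$ that such a lift moves $\Gamma_k$ only within $\Gamma_{i+k}$, and compare $\alpha\beta(x)$ with $\beta\alpha(x)$ modulo $\Gamma_{i+j+1}$, and in the inductive step you are more careful than the paper, which treats every element of $\Gamma_{k+1}$ as a single commutator $[x,y]$, whereas you use only that such commutators generate $\Gamma_{k+1}$ and check that the relevant set is a subgroup.
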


\begin{proof}
Let $\Gamma_i$ be defined as above. Let $\phi \in J_i$ and $\Phi$ be an automorphism of $\Gamma$ representing $\phi$. Since $\phi$ induces an inner automorphism of $\Gamma/\Gamma_{i+1}$, we can modify $\Phi$ by composing it with some inner automorphism such that $\Phi(x)x^{-1} \in \Gamma_{i+1}$ for every $x \in \Gamma$.

We now prove by induction on $j \geq 1$ that for every $x \in \Gamma_j$, $\Phi(x)x^{-1} \in \Gamma_{i+j}$. The case $j=1$ was already proved in the paragraph above. Assume that the assertion is correct for $1 \leq j \leq k$. For every $z \in \Gamma_{k+1}$, we can write $z = [x,y]$, where $x \in \Gamma$, $y \in \Gamma_k$. By the induction hypothesis,
\[
a = \Phi(x)x^{-1} \in \Gamma_{i+1}, \quad b = \Phi(y)y^{-1} \in \Gamma_{i+k}.
\]
Then
\[
\Phi(z)z^{-1} = [\Phi(x),\Phi(y)]z^{-1} = axbyx^{-1}a^{-1}y^{-1}b^{-1}z^{-1}.
\]
Note that $[\Gamma_r,\Gamma_s] \subseteq \Gamma_{r+s}$ for every $r,s \geq 1$. In particular,
\[
[\Gamma_1,\Gamma_{i+k}] \subseteq \Gamma_{i+k+1}, \quad [\Gamma_{i+1},\Gamma_k] \subseteq \Gamma_{i+k+1}.
\]
Then
\[
[b,yx^{-1}a^{-1}y^{-1}] \in \Gamma_{i+k+1}, \quad [a^{-1},y^{-1}] \in \Gamma_{i+k+1}, \quad [a,z] \in \Gamma_{i+k+2} \subset \Gamma_{i+k+1}
\]
and
\[
\Phi(z)z^{-1} \equiv axyx^{-1}a^{-1}y^{-1}z^{-1} \equiv axyx^{-1}y^{-1}a^{-1}z^{-1} \equiv aza^{-1}z^{-1} \equiv [a,z] \equiv 1 \pmod{\Gamma_{i+k+1}}.
\]
Hence $\Phi(z)z^{-1} \in \Gamma_{i+k+1}$, and we have proved the assertion.

Let $\phi \in J_i$, $\psi \in J_j$, and $\Phi$, $\Psi$ be automorphisms that represent $\phi$, $\psi$ respectively and satisfy the above property. For $x\in \Gamma$, we write
\[
a = \Phi(x)x^{-1} \in \Gamma_{i+1}, \quad b = \Psi(x)x^{-1} \in \Gamma_{j+1}.
\]
Note that
\[
\Phi(b)b^{-1} \in \Gamma_{i+j+1}, \quad \Psi(a)a^{-1} \in \Gamma_{i+j+1}, \quad [a,b] \in \Gamma_{i+j+2} \subset \Gamma_{i+j+1}.
\]
Then
\[
\Phi \Psi(x) \equiv \Phi(bx) \equiv bax \equiv abx \equiv \Psi(ax) \equiv \Psi \Phi(x) \pmod{\Gamma_{i+j+1}}.
\]
Hence $[\Phi,\Psi](x)x^{-1} \in \Gamma_{i+j+1}$ for every $x \in \Gamma$. By definition, $[\phi,\psi] \in J_{i+j}$.
\end{proof}

Now we are ready to prove Theorem \ref{torelli_bi_orderable}.

\begin{proof}[Proof of Theorem \ref{torelli_bi_orderable}]
By Theorem \ref{torsion-free}, $J_i/J_{i+1}$ is torsion-free for every $i \geq 1$. Then Lemma \ref{torsion-free_abelian_bi-orderable} gives a total order $<_k$ on $J_k/J_{k+1}$ that is invariant under multiplication. Since $\bigcap_{i=1}^{\infty} J_i \subset Z(\mathrm{Mod}(S))$ is trivial, there exists a largest $l(x) \in \mathbb{Z}_+$ such that $x \in J_{l(x)}$ for every nontrivial $x \in \mathcal{I}(S_g)$. Define a total order $<$ on $\mathcal{I}(S_g)$ as the following. Let $x$ be positive if and only if its nontrivial image in $J_{l(x)}/J_{l(x)+1}$ is positive with the order $<_{l(x)}$. Let $x<y$ if and only if $x^{-1}y$ is positive. Then $<$ is a total order on $\mathcal{I}(S_g)$ invariant under left multiplication. Since $[J_1,J_k] \subseteq J_{k+1}$ for every $k \geq 1$, $gxg^{-1}$ and $x$ have the same image in $J_{l(x)}/J_{l(x)+1}$ for every $g \in \mathcal{I}(S_g)$. Hence $<$ is also invariant under right multiplication, and $\mathcal{I}(S_g)$ is bi-orderable with $<$.
\end{proof}

\section{Equivariant Blow-up of a dense orbit}\label{equivariant_blow_up}

In this section, we introduce the technique of equivariant blow-up in a form that we will apply in our main construction. Generally speaking, given a group action $\rho$ of $G$ on a space $M$, an \textit{equivariant blow-up} of a $G$-invariant space $Z$ consists of:
\begin{enumerate}
    \item a blown-up space $M'$
    \item a lifted group action $\rho'$ of $G$ on $M'$
    \item a surjective equivariant map $h \colon M' \to M$ such that $h^{-1}(z)$ is a single point for $z \in M \setminus Z$ and a specific fiber for $z \in Z$.
\end{enumerate}
Here the equivariance of $h$ means that $h\rho'(g) = \rho(g)h$ for every $g \in G$.

In the following theorem, we discuss the equivariant blow-up of a dense orbit on a sphere, where $h^{-1}(z)$ is an embedded closed ball for every $z \in Z$. Recall that a family $\mathcal{F}$ of subsets of some metric space is called a \textit{null family} if for every $\epsilon > 0$, there are only finitely many $A \in \mathcal{F}$ such that $\mathrm{diam}(A) > \epsilon$.

\begin{theorem}\label{Blow_up}
Let $\Gamma$ be a countable group. Let $n \geq 1$, and
\[
\rho \colon \Gamma \to \mathrm{Diff}^+(\mathbb{S}^n)
\]
be a group action. Assume that there exists $x_0 \in \mathbb{S}^n$ with trivial $\Gamma$-stabilizer such that its $\Gamma$-orbit $Z$ is dense in $\mathbb{S}^n$. Then there exits a group action
\[
\rho' \colon \Gamma \to \mathrm{Homeo}^+(\mathbb{S}^n)
\]
and a surjective, continuous map $h:\mathbb{S}^n \to \mathbb{S}^n$ that satisfies:
\begin{enumerate}[label=(\arabic*)]
  \item \textbf{Equivariance}: $h\rho'(\gamma) = \rho(\gamma)h$ for every $\gamma \in \Gamma$;
  \item $h^{-1}(\rho(\gamma)x_0)$ is a topologically embedded closed $n$-ball for every $\gamma \in \Gamma$;
  \item $h^{-1}(x)$ is a single point for every $x \in S^n \setminus Z$,
  \item $\{h^{-1}(\rho(\gamma)x_0) : \gamma \in \Gamma \}$ is a null family for some compatible metric on the domain sphere.
\end{enumerate}
\end{theorem}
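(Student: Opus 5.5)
We follow the Bowden--Mann construction. Enumerate $\Gamma=\{\gamma_1,\gamma_2,\dots\}$ and write $z_k=\rho(\gamma_k)x_0$; since the stabilizer of $x_0$ is trivial, the points $z_k$ are distinct and $\gamma\mapsto\rho(\gamma)x_0$ is a bijection $\Gamma\to Z$. At each $z\in Z$ we replace $z$ by the closed ball
\[
B_z=\bigl(T_z\mathbb{S}^n\setminus\{0\}\bigr)/\mathbb{R}_{>0}\ \cup\ \{\text{cone}\},
\]
that is, the cone over the oriented projectivized tangent sphere $\mathbb{P}^+(T_z\mathbb{S}^n)\approx\mathbb{S}^{n-1}$. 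We realize this concretely as a limit of finite blow-ups. Choose radii $r_k>0$ with $\sum_k r_k<\infty$, and additionally $r_k\to0$ fast enough that the balls behave like a null sequence. Build $M'$ as the set
\[
\bigl(\mathbb{S}^n\setminus Z\bigr)\ \sqcup\ \bigsqcup_k B_{z_k}.
\]
We topologize $M'$ via an explicit metric or by realizing it as an inverse limit: let $M_N$ be the sphere obtained by the oriented real blow-up of $z_1,\dots,z_N$ with a collar ball $B_{z_k}$ glued in, so each $M_N\approx\mathbb{S}^n$, since blowing up a point and gluing a ball back in is homeomorphic to the original. There are natural collapse maps $M_{N+1}\to M_N$ that crush $B_{z_{N+1}}$ to a point.

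The key step is to show that the limit is a sphere. We construct homeomorphisms $f_N\colon\mathbb{S}^n\to M_N$ compatible up to small error, so that the compositions $\mathbb{S}^n\to M_N\to\mathbb{S}^n$ converge uniformly to a continuous surjection $h$. Concretely, we inductively define $h_N\colon\mathbb{S}^n\to\mathbb{S}^n$ which collapses disjoint closed round balls $D_1,\dots,D_N$ of diameter $<2^{-k}$ to the points $z_k$. Each $h_N$ is a homeomorphism off the balls and is supported near $z_N$ at step $N$, with $\|h_{N+1}-h_N\|_\infty<2^{-N}$. The limit $h$ then has point preimages off $Z$ and ball preimages $D_k$ over $Z$. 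The $D_k$ are disjoint, tame and shrinking, so they form a null family; this gives (2), (3) and (4). An alternative shortcut is to build the decomposition $\{D_k\}\cup\{\text{points}\}$ of the domain sphere and invoke the fact that a null, cellular, upper semicontinuous decomposition of $\mathbb{S}^n$ shrinks (Moore/Bing shrinking). Then $\mathbb{S}^n/\{D_k\}\approx\mathbb{S}^n$, and $h$ is the quotient map. We must also ensure the $D_k$ are pairwise disjoint, which follows by choosing each new ball inside the complement of the previous ones and away from $z_k$'s already-used neighborhoods. The collapse property must hold at the correct points, and we need to control distortion so that the limit stays injective off $\bigcup D_k$; the Bing shrinking formulation handles this cleanly.

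Next we define $\rho'$. On $h^{-1}(\mathbb{S}^n\setminus Z)$ we set $\rho'(\gamma)=h^{-1}\rho(\gamma)h$. On $D_k=h^{-1}(z_k)$ we need a homeomorphism $D_k\to D_{j}$, where $z_j=\rho(\gamma)z_k$. We identify each $D_k$ with the cone on $\mathbb{P}^+(T_{z_k}\mathbb{S}^n)$ by a homeomorphism $\theta_k$ chosen so that approaching $\partial D_k$ from outside along a direction $v$ corresponds to the ray $[v]$. This compatibility is where the blow-up matters: the identification should make a neighborhood of $D_k$ look like the oriented real blow-up. Then set
\[
\rho'(\gamma)|_{D_k}=\theta_j^{-1}\circ\mathrm{Cone}\bigl(\mathbb{P}^+ d\rho(\gamma)_{z_k}\bigr)\circ\theta_k .
\]
The chain rule gives the group law exactly, which is where the trivial stabilizer is used: each $D_k$ is labeled by a unique group element, so there is no consistency condition from stabilizers. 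Orientation is preserved because $d\rho(\gamma)$ preserves orientation.

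The main obstacle is continuity of $\rho'(\gamma)$ at $\partial D_k$ and at accumulation points of the small balls. At points off $Z$, continuity follows from the null property: near such a point, the balls $D_j$ are tiny and $h$ is close to injective, so $\rho'(\gamma)$ is uniformly close to $h^{-1}\rho(\gamma)h$ in the sense of decompositions. At $\partial D_k$, we need $h$ to be constructed as the genuine blow-up near $z_k$, so that the induced map is the projectivized derivative on boundary directions; this is exactly why $\rho$ must be $C^1$. We will ensure this by building $h$ near each $D_k$ in polar form, $h(\theta_k^{-1}(v,t))$ equal to the exponential at $z_k$ of $\phi_k(t)v$ for a radial profile $\phi_k$. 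Then a first-order Taylor estimate for $\rho(\gamma)$ at $z_k$ gives continuity on the boundary sphere. Since $\rho'(\gamma^{-1})$ is an inverse, each $\rho'(\gamma)$ is a homeomorphism, and equivariance (1) holds by construction.
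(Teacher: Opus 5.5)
Your architecture is the paper's: enumerate the orbit, blow up one point at a time by a radial collapse so that $\partial D_k$ is identified with $\mathbb{P}^+(T_{z_k}\mathbb{S}^n)$, then extend by coning the projectivized derivative. Your collapse maps $h_N$ are essentially the paper's composites $F_N=f_1\circ\cdots\circ f_N$, extended over the removed balls. The gap is the step everything rests on, which you only assert: that the uniform limit $h=\lim h_N$ has a single point over each $x\notin Z$ and exactly $D_k$ over $z_k$.

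Uniform convergence does not give this. Nothing in your construction prevents infinitely many later collapses from dragging a point $y\notin\bigcup_j D_j$ onto some $\partial D_k$ in the limit (so $h(y)=z_k$), or from merging two distinct points. You say distortion must be controlled, but you never impose a condition. The paper does, in two parts:
\begin{itemize}
\item it chooses the inner radii so small that each collapse satisfies $d(f_k^{-1}(x),f_k^{-1}(y))>\lambda_k\,d(x,y)$ with $\prod_k\lambda_k\to\delta>0$;
\item it makes the displacements $\epsilon_k$ summable.
\end{itemize}
The resulting uniform bound $d(F_k^{-1}(x),F_k^{-1}(y))>\delta\,d(x,y)$ makes $\Phi\colon\varprojlim X_k\to\bigcap_k X_k$, $(p_i)\mapsto\lim p_i$, injective, hence a homeomorphism. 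In the inverse limit the fiber structure over $X_0$ is automatic. With your own system $M_N$ the fibers are likewise automatic, and the missing fact $\varprojlim M_N\cong\mathbb{S}^n$ could be supplied by M.~Brown's theorem on inverse limits with near-homeomorphism bonding maps. But you would have to invoke it; your ``$f_N$ compatible up to small error'' leaves exactly the needed control unspecified.

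The two devices you offer instead do not close the gap.
\begin{itemize}
\item \textbf{Bing shrinking} answers a different question. It shows that $\mathbb{S}^n/\{D_k\}$ is a sphere, but says nothing about the fibers of your $h$. It also gives no homeomorphism to the target sending $[D_k]$ to the prescribed $z_k$; a prescribed bijection between countable dense sets need not extend to a homeomorphism. Still less does it give one compatible with tangent directions at $z_k$, which is what continuity of $\rho'$ along $\partial D_k$ requires.
\item \textbf{Polar form near $D_k$} is impossible. The desired $h$ cannot have the form $\exp_{z_k}(\phi_k(t)v)$ on any collar of $D_k$, and later supports cannot be kept away from earlier ones. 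For the desired $h$, every neighborhood of $D_k$ contains $h^{-1}(W)$ for some neighborhood $W$ of $z_k$, and $W$ contains infinitely many points of the dense orbit $Z$. So every collar of $D_k$ contains infinitely many later balls, and $h$ is constant on each of them.
\end{itemize}
Exact blow-up structure is available only at a finite stage. One must factor $h=h_k\circ C_k$, with $C_k$ the limit of the later collapses; the paper instead projects to the stage $X_j$. One then needs $C_k^{-1}(b)=\{b\}$ for $b\in\partial D_k$ to pull directional convergence back to $\partial D_k$, and that is again the missing fiber statement.
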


This theorem comes from Bowden and Mann \cite[Proposition 4.1]{equivariant_blow-up}, and we give a proof in the form used in our main construction.

While blowing up a finite orbit is a standard construction, we need to be more careful when blowing up a dense orbit to preserve the continuity of the map. The strategy of our proof is to first construct the equivariant blow-up inductively and then take the inverse limit. More precisely, we blow up one point each time by replacing this point with the oriented projective space of its tangent space. To distinguish the topological type of the inverse limit, we make use of the characterization of Sierpiński curves given by Cannon \cite{Sierpiński}.

\begin{definition}[Cannon, \cite{Sierpiński}]\label{Sierpiński_curve}
Let $X$ be a non-empty compact connected metric space. The space $X$ is called an \textit{$(n-1)$-dimensional Sierpiński curve} if there is an embedding $h \colon X \to \mathbb{S}^n$ such that the connected components $U_1, U_2, \cdots$ of $\mathbb{S}^n \setminus h(X)$ form a null family satisfying:
\begin{enumerate}
    \item $\mathbb{S}^n \setminus U_i$ is a $n$-cell for every $i \ge 1$)
    \item $\overline{U_i} \cap \overline{U_j} = \varnothing$ for every $i \neq j$
    \item $\bigcup_{i=1}^{\infty} U_i$ is dense in $\mathbb{S}^n$.
\end{enumerate}
\end{definition}

Cannon proved that there is precisely one $(n-1)$-dimensional Sierpiński curve up to homeomorphism \cite{Sierpiński} when $n \neq 4$. Since Quinn \cite{Quinn} proved the Annulus Theorem in dimension $4$, Cannon's proof can be extended to $n \geq 3$. This result can be stated in different ways; see \cite{Sierpiński} for more details. In particular, Cannon's result implies that every $(n-1)$-dimensional Sierpiński curve is homeomorphic to the standard one whose complement in $\mathbb{S}^n$ is a null family of closed $n$-balls. Then we may conclude that collapsing each boundary component of a $(n-1)$-dimensional Sierpiński curve to a single point gives an $n$-sphere.

\begin{proof}[Proof of Theorem \ref{Blow_up}]
We write $\Gamma=\{\gamma_1, \gamma_2, \cdots\}$, and $z_i = \gamma_i \cdot x_0$. We first construct an equivariant blow-up for a single point.

Equip $X_0=\mathbb{S}^n$ with the standard round metric $d$. Choose a sufficiently small $\epsilon_1>0$. Let $D_1'$ and $D_1$ be the $\epsilon_1$-ball and the $\frac{\epsilon_1}{2}$-ball about $z_1$ respectively. Let $X_1=X_0 \setminus D_1$. Let $f_1 \colon X_1 \to X_0$ be the identity on $X_1 \setminus D_1'$ and be the radial collapse along geodesics through $z_1$ on the annulus $D_1' \setminus D_1$. Then $f_1$ is a smooth surjective map. To ensure that the final construction is a Sierpiński curve, we further require that $d(f_1(x),f_1(y)) < 3 d(x,y)$ for every $x,y \in X_1$. Equivalently, $d(f_1^{-1}(x),f_1^{-1}(y))>\frac{1}{3} d(x,y)$ for every $x,y \in X_0$ and every choice of the preimage $f_1^{-1}(z_1)$. This can be realized due to the definition of $f_1$. Note that $f(\partial D_1) = z_1$ and $f$ is injective outside $\partial D_1$. Hence $f_1^{-1}$ identifies every $v \in \partial D_1$ with an oriented tangent direction to $X_0$ at $z_1$, and every diffeomorphism $\phi$ of $X_0$ that fixes $z_1$ lifts to a homeomorphism $\tilde{\phi}$ of $X_1$ such that $f_1 \tilde{\phi} = \phi f_1$.

We now construct $X_m \subset \mathbb{S}^n$ and a smooth surjective map $f_m \colon X_m \to X_{m-1}$ inductively for every $m \geq 1$. Assume that $X_m$ and $f_m$ is already defined for $1 \leq m \leq k$. Now we construct $X_{k+1}$ and $f_{k+1}$ as follows. Let
\[
F_m = f_1 f_2 \cdots f_m \colon X_m \to X_0.
\]
Choose
\[
\epsilon_{k+1} \leq \max\{\frac{\epsilon_k}{2}, \frac{d(F_k^{-1}(z_{k+1}),\partial X_k)}{2}\},
\]
and let $D_{k+1}'$, $D_{k+1}$ be the closed $\epsilon_{k+1}$-ball, $r_{k+1}\epsilon_{k+1}$-ball about $F_k^{-1}(z_{k+1})$ respectively, where $0 < r_{k+1} <1$ is sufficiently small. Let $X_{k+1} = X_k \setminus D_{k+1}$ and $f_{k+1}$ be defined in the same way as above. We also require
\[
d(f_{k+1}^{-1}(x),f_{k+1}^{-1}(y)) > \lambda_{k+1}d(x,y)
\]
for every $x,y \in X_k$ and every choice of the preimage $f_{k+1}^{-1}(z_{k+1})$, where $0< \lambda_{k+1} <1$. This can be realized for any $\lambda_{k+1}$ sufficiently close to $1$ if we choose $r_{k+1}$ sufficiently small. In particular, we choose $\lambda_{k+1}$ such that $\lambda_1 \lambda_2 \cdots \lambda_{k+1} \to \delta$ for some $\delta >0$. Similarly, $F_{k+1}^{-1}$ identifies every $v \in \partial D_{k+1}$ with an oriented tangent direction to $X_0$ at $z_{k+1}$. Hence every diffeomorphism $\phi$ of $X_0$ that preserves the set $\{z_1, z_2, \cdots, z_{k+1}\}$ lifts to a homeomorphism $\tilde{\phi}$ of $X_{k+1}$ such that $F_{k+1} \tilde{\phi} = \phi F_{k+1}$. This can be done by conjugation of $F_{k+1}$ to reduce to the case when $\phi$ fixes $z_1, z_2, \cdots, z_{k+1}$ pointwise and the identifications described above.

By the construction above, we conclude that $\{f_k\}$ and $\{F_k\}$ have the following properties:
\begin{enumerate}
    \item For every $k \geq 1$, we have $\sup\limits_{x \in X_k} d(f_k(x), x) \leq \epsilon_k \leq \frac{\epsilon_1}{2^{k-1}} < \frac{1}{2^{k-1}}$.
    \item For every $x,y \in X_0$, we have $d(F_k^{-1}(x), F_k^{-1}(y)) > \delta d(x,y)$ for all $k \geq 1$.
\end{enumerate}

Now let $X = \varprojlim X_k$ be the inverse limit of the system of maps $X_k \xrightarrow{f_k} X_{k-1}$. In other words,
\[
X = \{(\cdots, p_2, p_1, p_0) \mid p_i \in X_i \text{ and } p_{k-1} = f_k(p_k)\}.
\]
Since $f_k$ is continuous, $X$ is closed, and hence is compact. Also, property (1) gives that $\lim_{i \to \infty}p_i$ is well-defined. Then we can define $\Phi \colon X \to \bigcap_kX_k$ by
\[
\Phi(\cdots,p_2, p_1, p_0) = \lim_{i \to \infty}p_i.
\]
It follows from property (2) that $\Phi$ is injective. Since $X$ is compact and $\mathbb{S}^n$ is Hausdorff, $\Phi$ is a homeomorphism onto its image.

Now we check that $\bigcap_kX_k$ is an $(n-1)$-dimensional Sierpiński curve. It suffices to show that $\bigcup_kD_k$ is dense; the other conditions are already satisfied by construction. If there is an open ball $B \subset \overline{\mathbb{S}^n \setminus \bigcup_kD_k}$ of diameter $\epsilon$, we can fix some $k \geq 1$ such that $\epsilon > \frac{1}{2^{k-1}}$. Consider the maps
\[
\{f_{k+m}f_{k+m-1} \cdots f_k \mid m=1,2,\cdots\}.
\]
By property (1), this sequence converges pointwise to a map $f \colon \bigcap_kX_k \to X_k$ such that $d(f(x),x)<\frac{1}{2^{k-1}}$ for every $x \in \bigcap_kX_k$. Thus $f(B)$ contains a nonempty open set in $X_k$, and hence intersects the dense set $F_k^{-1}(Z)$. However, this contradicts with the assumption that $B \cap \overline{\bigcap_kD_k} = \varnothing$. Thus $\bigcap_kX_k$ is an $(n-1)$-dimensional Sierpiński curve.

Now we prove that $\Phi$ is a homeomorphism. Note that $\bigcup_k \partial D_k \subset \Phi (X)$ since
\[
(\cdots,p_k,p_k,f_k(p_k), f_kf_{k-1}(p_k), \cdots, f_kf_{k-1}\cdots f_1(p_k)) \in X
\]
for $p_k \in \partial D_k$. Since $X$ is compact, $\Phi(X)$ contains a dense subset $\bigcup_k \partial D_k$ in $\bigcap_kX_k$, and $\Phi$ is a homeomorphism to its image, we can conclude that $\Phi$ is a homeomorphism. In the following discussion, we identify $X$ with $\bigcap_kX_k$ and construct the group action on $X$.

Define the projection $F \colon X \to X_0$ by $F(\cdots,p_2,p_1,p_0) = p_0$. Note that the restriction of $F$ on $X \setminus F^{-1}(Z)$ gives a homeomorphism
\[
F \colon X\setminus F^{-1}(Z) \to X_0 \setminus Z
\]
since $X\setminus F^{-1}(Z)$ and $X_0 \setminus Z$ are both compact metric spaces.

Now we lift an action $\rho$ of $\Gamma$ on $X_0$ to an action $\rho'$ on $X$ such that their restriction on $X\setminus F^{-1}(Z)$ agree. This $\rho'$ is given by the identification of $F^{-1}(z_i)$ with the oriented projective space of $T_{z_i}(X_0)$. By our descrition of $X_i$ above, it is by injection. It remains to show that this action is by homeomorphisms. Let $\gamma \in \Gamma$ and $x_i \to x_\infty$ in $X$. If $x_\infty \notin F^{-1}(Z)$, then the continuity follows- direcetly from construction. If $x_\infty \in F^{-1}(z_j)$ for some $j$, we consider the projection of $\{x_i\}$ in $X_j$. Let $v$ be the tangent direction to $X_0$ at $z_j$ representing $x_\infty$. Then we have $F_j(x_i) \to z_j$ and $\frac{F_j(x_i)-z_j}{\|F_j(x_i)-z_j\|} \to v$. Since $\rho(\gamma)$ is a diffeomorphism, $\rho'(\gamma)$ is continuous at $x_\infty$. Hence $\rho'$ is the desired action of $\Gamma$ on $X$ by homeomorphisms.

Recall from the discussion before this proof that collapsing each boundary component of $X \approx \bigcap_k X_k$ to a single point gives a sphere $\bar X$. Then by the discussions in the two paragraphs above, $F$ and the collapsing map together give the desired map $h \colon \mathbb{S}^n \to \mathbb{S}^n$. The action on $X$ constructed above naturally extends to the desired action on the $n$-sphere. By the description of $F$ above, the first three conditions of $h$ in Theorem \ref{Blow_up} are already satisfied. The last condition follows from property (1) of the $\{f_k\}$.
\end{proof}

Let $\mathbb{P}^+(V)$ denote the oriented projective space of a finite-dimensional vector space $V$. In order to apply the above theorem to the natural action of $\mathrm{Sp}(2g,\mathbb{Z})$ on $\mathbb{P}^+(\mathbb{R}^{2g}) \approx \mathbb{S}^{2g-1}$ later, we prove in the following that this action satisfies the assumptions in Theorem \ref{Blow_up}.

\begin{lemma}\label{dense_orbit}
Let $g \geq 1$. Then for the natural action of $\mathrm{Sp}(2g,\mathbb{Z})$ on the oriented projective space $\mathbb{P}^+(\mathbb{R}^{2g}) \approx \mathbb{S}^{2g-1}$, the set of points in $\mathbb{P}^+(\mathbb{R}^{2g})$ with trivial stabilizer and dense orbit is a dense $G_\delta$ set.
\end{lemma}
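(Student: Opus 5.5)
We treat the two conditions separately, show that each defines a dense $G_\delta$ (or residual) subset of $\mathbb{P}^+(\mathbb{R}^{2g})$, and then intersect them. Baire's theorem applies because $\mathbb{S}^{2g-1}$ is a compact metric space. Write $G=\mathrm{Sp}(2g,\mathbb{Z})$; it is countable.

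\emph{Trivial stabilizer.} For $A\in G$, let $\mathrm{Fix}(A)\subset\mathbb{P}^+(\mathbb{R}^{2g})$ be its fixed set. A point $[v]$ is fixed by $A$ exactly when $Av=\lambda v$ for some $\lambda>0$.
\begin{itemize}
\item If $A\neq \mathrm{Id}$, then $\mathrm{Fix}(A)$ is the union of the positive projectivizations of the eigenspaces of $A$ for real positive eigenvalues.
\item Each such eigenspace is a proper linear subspace. It cannot be all of $\mathbb{R}^{2g}$, since then $A=\lambda\,\mathrm{Id}$ with $\lambda>0$, and integrality together with $\det A=1$ forces $\lambda=1$, i.e.\ $A=\mathrm{Id}$.
\item Hence $\mathrm{Fix}(A)$ is a finite union of lower-dimensional great subspheres. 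It is closed and nowhere dense.
\end{itemize}
The set of points with trivial stabilizer is $\bigcap_{A\neq \mathrm{Id}}\bigl(\mathbb{S}^{2g-1}\setminus \mathrm{Fix}(A)\bigr)$. This is a countable intersection of open dense sets, hence a dense $G_\delta$.

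\emph{Dense orbit.} Fix a countable basis $\{U_k\}$ of the sphere. The set of points with dense orbit is
\[
\bigcap_k \bigcup_{A\in G} A^{-1}U_k ,
\]
a countable intersection of open sets. It remains to show each $W_k=\bigcup_{A} A^{-1}U_k$ is dense. Equivalently, for any nonempty open $U,V$ there is $A\in G$ with $AV\cap U\neq\varnothing$, i.e.\ the action is topologically transitive.

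\emph{Main obstacle: topological transitivity.} This is where the real content lies, and there are two routes.
\begin{itemize}
\item \textbf{Direct route.} The set of rational directions $[v]$, $v\in\mathbb{Z}^{2g}$ primitive, is dense. By the transitivity of $G$ on primitive vectors (extend $v$ to a symplectic basis), all of them form a single $G$-orbit. So every open set meets the orbit of $[e_1]$, but that orbit is not itself a point of $V$ being moved into $U$. To repair this, choose a hyperbolic $A\in G$ with a simple dominant real eigenvalue $\lambda>1$ and eigenvector $w$. Such elements exist, e.g.\ a block-diagonal element built from $\begin{pmatrix}2&1\\1&1\end{pmatrix}$ and its inverse transpose in $\mathrm{Sp}$. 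For $x$ outside a nowhere-dense repelling subspace, $A^nx\to[w]$. Conjugating by $B\in G$ moves the attractor to $B[w]$. Since $G\cdot[w]$ is dense (see the next point), given $U,V$ pick $B$ with $B[w]\in U$; then $(BAB^{-1})^nV$ eventually meets $U$.
\item \textbf{Density of $G\cdot[w]$.} For this I would rather invoke Moore's ergodicity theorem. $G$ is a lattice in $\mathrm{Sp}(2g,\mathbb{R})$, which acts transitively on $\mathbb{P}^+(\mathbb{R}^{2g})=\mathrm{Sp}(2g,\mathbb{R})/P$. The stabilizer $P$ is noncompact, so $G$ acts ergodically on $\mathrm{Sp}(2g,\mathbb{R})/P$ with respect to the Lebesgue class. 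Ergodicity gives almost every orbit dense: for each $k$, $W_k$ is a nonempty open $G$-invariant set, hence conull, hence dense.
\end{itemize}
The ergodicity route bypasses the dynamical argument entirely, and I would present it as the main proof, with the hyperbolic-element argument as an elementary alternative.

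\emph{Conclusion.} Intersecting the two dense $G_\delta$ sets gives a dense $G_\delta$ set by Baire, which proves the lemma.
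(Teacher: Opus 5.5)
Your proof is correct, and the trivial-stabilizer half is the same as the paper's. You state the scalar step more precisely: a positive scalar $\lambda\,\mathrm{Id}$ of determinant $1$ is $\mathrm{Id}$. The paper's phrase that $1$ is the only possible positive eigenvalue is accurate only in that scalar case.

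The real difference is the topological transitivity step, and there you argued yourself out of the paper's one-line argument. The paper notes, as you do, that the rational rays $[v]$ with $v\in\mathbb{Z}^{2g}$ primitive are dense and form a single $\mathrm{Sp}(2g,\mathbb{Z})$-orbit. It concludes transitivity directly, and that is enough. Any action with one dense orbit $G\cdot p$ is topologically transitive. Given nonempty open $U,V$, choose $a,b\in G$ with $ap\in V$ and $bp\in U$. Then $ba^{-1}$ sends $ap\in V$ to $bp\in U$, so $ba^{-1}V\cap U\neq\varnothing$. So the obstacle you identified is not real.

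Your Moore-ergodicity route is valid. $\mathrm{Sp}(2g,\mathbb{Z})$ is a lattice in $\mathrm{Sp}(2g,\mathbb{R})$, which acts transitively on $\mathbb{P}^+(\mathbb{R}^{2g})$ with noncompact stabilizer. Hence the action is ergodic, and each nonempty open invariant $W_k$ is conull and therefore dense. This buys a stronger, measure-theoretic conclusion (ergodicity, almost every orbit dense). The cost is lattice theory and Moore's theorem, whereas the paper needs only transitivity on primitive vectors.

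You should drop the hyperbolic-element ``elementary alternative.'' It needs density of $G\cdot[w]$ for one specific, typically irrational, eigendirection $w$. Ergodicity does not give you that, since it only controls almost every orbit. And if you did know some orbit were dense, the $ba^{-1}$ trick would finish the proof without any hyperbolic dynamics. A minor point as well: in $\mathbb{P}^+$ the iterates converge to $[w]$ or $[-w]$ depending on the side of the repelling hyperplane. This is harmless because $-\mathrm{Id}\in\mathrm{Sp}(2g,\mathbb{Z})$.
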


\begin{proof}
Let $E$ be the set described above. Let $G=\mathrm{Sp}(2g,\mathbb{Z})$ and $X = \mathbb{P}^+(\mathbb{R}^{2g})$. Define
\[
E_1 = \{x \in X \mid \mathrm{Stab}_G(x) = \{1\} \}, \quad E_2 = \{x \in X \mid \overline{G \cdot v} = X \}.
\]
Then $E = E_1 \cap E_2$. It suffices to prove that $E_1$ and $E_2$ are both dense $G_\delta$ sets in X.

For every $g \in G$, let its fixed point set be
\[
\mathrm{Fix}(g) = \{x \in X \mid g \cdot x = x\}.
\]
By continuity of the action, $\mathrm{Fix}(g)$ is closed. Note that every $x \in \mathrm{Fix}(g)$ represents an eigenspace corresponding to a positive eigenvalue. If $\mathrm{int}(\mathrm{Fix}(g))$ is nonempty, $g$ must have a $2g$-dimensional eigenspace corresponding to a positive eigenvalue. However, the only possible positive eigenvalue for $g \in \mathrm{Sp}(2g,\mathbb{Z})$ is $\lambda = 1$. Hence such $g$ must be trivial. In other words, $X \setminus \mathrm{Fix}(g)$ is dense for every $g \neq 1$. Since
\[
E_1 = \bigcap_{g \in G \setminus \{1\}} (X \setminus \mathrm{Fix}(g))
\]
is an intersection of countably many open dense subsets of $X$, $E_1$ is a dense $G_\delta$ set by Baire Category Theorem \cite{Rudin}.

To prove $E_2$ is also a dense $G_\delta$ set, we first prove that the action of $G$ on $X$ is topological transitive, that is, for every nonempty open sets $U,V \subset X$, $\exists g \in G$ such that $g(U) \cap V \neq \varnothing$. Since the image of the primitive vectors in $\mathbb{Z}^{2g}$ are dense in $X$, it suffices to prove that $G$ acts transitively on the primitive vectors, that is, for every $v,w \in \mathbb{Z}^{2g}$, $\exists g \in G$ such that $g \cdot v = w$. Recall that a nonzero element of $H_1(S_g;\mathbb{Z}) \cong \mathbb{Z}^{2g}$ is represented
by an oriented simple closed curve if and only if it is primitive; see \cite[Proposition 6.2]{Primer}. Then by the change of coordinates principle \cite[Section 1.3]{Primer}, there exists an orientation-preserving homeomorphism $f$ which takes the simple closed curve representing $v$ to that representing $w$. Hence the image of $f$ under the symplectic representation takes $v$ to $w$.

Now choose a countable topological basis $\{U_i\}_{i=1}^\infty$ of $X$. Then $x \in E_2$ if and only if $\forall k \geq 1, \exists g \in G, g \cdot x \in U_k$. Let $W_k = \bigcup_{g \in G} g^{-1} U_k$. Then $W_k$ is open and $E_2 = \bigcap_{k=1}^\infty W_k$. Since $G$ acts on $X$ topological transitively, $W_k$ is dense in $X$. Hence $E_2$ is a dense $G_\delta$ set by Baire Category Theorem \cite{Rudin}.
\end{proof}

\section{Faithful Action of a Left-Orderable Group on a closed ball}\label{faithful_action_on_closed_ball}

In this section, we explain the connections between the algebraic properties and the dynamical properties of left-orderable groups. The following result is classic.

\begin{lemma}[Boyer and Clay, Proposition 5.1, \cite{left-orderable}]\label{act_on_R}
A nontrivial countable group $G$ is left-orderable if and only if there exists a faithful representation $\rho \colon G \to \mathrm{Homeo}^+(\mathbb{R})$.
\end{lemma}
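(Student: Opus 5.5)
The proof will go in two directions, using the standard dynamical realization of a left order and its converse.

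($\Leftarrow$) Suppose $\rho \colon G \to \mathrm{Homeo}^+(\mathbb{R})$ is faithful. Because $G$ is countable, I choose a dense sequence $q_1, q_2, \ldots$ in $\mathbb{R}$ (for instance, an enumeration of $\mathbb{Q}$). For $g \neq h$, the homeomorphisms $\rho(g)$ and $\rho(h)$ are distinct and continuous, so they differ on some open set. Hence there is a first index $i$ with $\rho(g)(q_i) \neq \rho(h)(q_i)$. I declare $g < h$ if $\rho(g)(q_i) < \rho(h)(q_i)$ at this first index. This relation is clearly a strict total order. It is left-invariant: replacing $g,h$ by $kg,kh$ leaves the first index unchanged, because $\rho(k)$ is a bijection, and preserves the inequality, because $\rho(k)$ is increasing.

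($\Rightarrow$) Suppose $<$ is a left order on the countable group $G$. I enumerate $G = \{g_0 = 1, g_1, g_2, \ldots\}$ and define an order embedding $t \colon G \to \mathbb{R}$ inductively. Set $t(g_0) = 0$. Given $t(g_0), \ldots, t(g_{n-1})$, I look at where $g_n$ sits relative to $g_0, \ldots, g_{n-1}$ in $<$. If it is larger than all of them, I set $t(g_n) = \max_{i<n} t(g_i) + 1$. If it is smaller than all of them, I set $t(g_n) = \min_{i<n} t(g_i) - 1$. Otherwise it lies between two neighbours, and I take $t(g_n)$ to be the midpoint of their values. Then $t$ is order-preserving. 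Left multiplication by $g$ acts on $t(G)$ as an order-preserving bijection, $t(h) \mapsto t(gh)$.

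The key step, and the main technical point, is extending this action continuously to all of $\mathbb{R}$. I first extend it to the closure $\overline{t(G)}$ by monotone limits. Then I extend it affinely across each complementary gap: each such gap is a bounded open interval whose two endpoints lie in $\overline{t(G)}$, and the map sends it to another gap of the same kind. The construction must also ensure that $t(G)$ is unbounded in both directions, so the extension covers all of $\mathbb{R}$. This holds because $G$ is nontrivial and left-orderable, hence has no largest or smallest element, so the "$\pm 1$" steps occur infinitely often and push $t(G)$ off to $\pm\infty$.

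With these conventions, the resulting maps are increasing homeomorphisms of $\mathbb{R}$. They compose correctly: on $t(G)$ this is immediate, and it passes to the closure and to the gaps because the extension procedure is canonical. Finally, the action is faithful, since $\rho(g)(0) = t(g) \neq 0 = t(1)$ whenever $g \neq 1$.
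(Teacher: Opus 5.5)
Your proposal is correct and follows the paper's proof essentially step for step: for $(\Leftarrow)$ the lexicographic order read off from a dense sequence, and for $(\Rightarrow)$ the same midpoint embedding $t$, extended to $\overline{t(G)}$ and then affinely across the gaps, with your unboundedness argument and the faithfulness check $\rho(g)(0)=t(g)$ supplying details the paper leaves implicit. The claim that you, like the paper, assert without proof (that gaps go to gaps) is where the midpoint rule is needed. Infinitely many insertions between the current neighbours halve their distance, so $\sup t(L)=\inf t(U)$ for every cut $G=L\sqcup U$ in which $L$ has no maximum or $U$ no minimum; hence every gap of $\overline{t(G)}$ is $(t(h),t(h'))$ with $h'$ the immediate successor of $h$, a property that left multiplication preserves.
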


\begin{proof}
If there exists a faithful representation $\rho \colon G \to \mathrm{Homeo}^+(\mathbb{R})$, we can choose a dense set $\{q_i\}_{i=1}^{\infty} \subset \mathbb{R}$. Since $\rho$ is faithful and $\{q_i\}_{i=1}^{\infty}$ is dense, for every $x,y \in G$, there exist $i \in \mathbb{Z}_+$ such that $\rho(x)(q_i) \neq \rho(y)(q_i)$. We define a total order $<$ on $G$ as the following: $x<y$ if there exists $k \in \mathbb{Z}_+$ such that $\rho(x)(q_i)=\rho(y)(q_i)$ for $1 \leq i \leq k-1$, and $\rho(x)(q_k) < \rho(y)(q_k)$. One can check that it is a well-defined total order. Note that for every $g \in G$, $\rho(g) \in \mathrm{Homeo}^+(\mathbb{R})$ preserves orientation of $\mathbb{R}$. If $x<y$ and $k \in \mathbb{Z}^+$ satisfies the conditions above, then for every $g \in G$,
\[
\rho(gx)(q_i) = \rho(g)(\rho(x)(q_i)) = \rho(g)(\rho(y)(q_i)) = \rho(gy)(q_i),
\]
for every $1 \leq i \leq k-1$, and
\[
\rho(gx)(q_k) = \rho(g)(\rho(x)(q_k)) < \rho(g)(\rho(y)(q_k)) = \rho(gy)(q_k).
\]
Hence $gx<gy$, and $G$ is left-orderable.

Conversely, if $G$ is left-orderable with total order $<$, we write $G=\{g_0, g_1, g_2 \cdots\}$, where $g_0 = 1$. We define a map $t \colon G \to \mathbb{R}$ by induction as the following. Let $t(g_0)=0$. If $t(g_i)$ is already defined for $0 \leq g \leq k$, we define
\[
t(g_{k+1}) = \begin{cases}
\max\{t(g_0), \dots, t(g_k)\} + 1 & \text{if } g_{k+1} > \max\{g_0, \dots, g_k\} \\
\min\{t(g_0), \dots, t(g_k)\} - 1 & \text{if } g_{k+1} < \min\{g_0, \dots, g_k\} \\
\frac{t(g_i)+t(g_j)}{2} & \text{if } g_i < g_{k+1} < g_j \text{ and } \nexists \, 0 \leq n \leq k \text{ s.t. } g_i < g_{n} < g_j
\end{cases}
\]
Let $G$ act on $t(G)$ by $g(t(h)) = t(gh)$. This action extends uniquely to the closure $\overline{t(G)}$. Since $\mathbb{R} \setminus \overline{t(G)}$ is a disjoint union of open intervals, we can extend the action affinely over $\mathbb{R} \setminus \overline{t(G)}$, which gives a faithful representation $\rho \colon G \to \mathrm{Homeo}^+(\mathbb{R})$ as desired.
\end{proof}

To make use of this dynamical property in our construction, we modify the action constructed in the proof of the above lemma to construct a faithful action on the closed $m$-ball.

\begin{theorem}\label{act_faithfully_on_closed_disks}
Let $K$ be a nontrivial countable left-orderable group. Let $m \geq 1$. Then there exists an injective homomorphism
\[
\theta \colon K \to \mathrm{Homeo}^+(D^m,\partial D^m),
\]
where $D^m$ is the closed $m$-ball.
\end{theorem}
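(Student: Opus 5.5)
The plan is to start from the faithful action on the line given by Lemma \ref{act_on_R}, put it on a closed interval fixing the endpoints, and then spread it over $D^m$ using a product (or radial) structure so that $\partial D^m$ is fixed pointwise.

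\textbf{Step 1: an action on the interval.} By Lemma \ref{act_on_R}, there is a faithful $\rho \colon K \to \mathrm{Homeo}^+(\mathbb{R})$. Conjugate by an orientation-preserving homeomorphism $\mathbb{R} \cong (0,1)$ and extend by fixing $0$ and $1$. Any order-preserving homeomorphism of $(0,1)$ extends continuously to $[0,1]$ this way. This gives a faithful $\rho_1 \colon K \to \mathrm{Homeo}^+([0,1],\{0,1\})$. For $m=1$ this already proves the theorem, since $D^1=[0,1]$.

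\textbf{Step 2: spreading over $D^m$.} For $m \ge 2$, identify $D^m$ with $[0,1]\times D^{m-1}$ (a cube, up to homeomorphism). Choose a continuous bump $\beta \colon D^{m-1} \to [0,1]$ with $\beta^{-1}(0)=\partial D^{m-1}$, for example $\beta(y)=1-|y|$.

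\emph{Attempt one.} Setting $\theta(k)(t,y)=(\rho_1(k)(t),y)$ only fixes the faces $\{0,1\}\times D^{m-1}$, not the side $[0,1]\times\partial D^{m-1}$. Reparametrizing the interval by $\beta$ does not help, because a one-parameter family of flows is not available for a general group.

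\emph{Working version.} Use a homeomorphism $\Phi$ from $\mathrm{int}(D^m)$ to the open region $\{(t,y): |y|<1,\ 0<t<1\}$ that behaves well near the boundary. Concretely, take the open cylinder $(0,1)\times \mathrm{int}(D^{m-1})$ and conjugate the $(0,1)$-factor by the homeomorphism $h_y(t)$ that squeezes $(0,1)$ into $(\tfrac{1-\beta(y)}{2},\tfrac{1+\beta(y)}{2})$ affinely. Define
\[
\theta(k)(t,y)=\bigl(h_y\,\rho_1(k)\,h_y^{-1}(t),\,y\bigr)
\]
on the set $\{|t-\tfrac12|\le \beta(y)/2\}$, and the identity elsewhere. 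On each fibre $\{y\}\times[0,1]$ this is $\rho_1(k)$ conjugated into a shrinking subinterval, extended by the identity.

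Check the properties as follows.

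\textbf{Homomorphism.} Fibrewise conjugation by $h_y$ respects composition, so $\theta$ is a homomorphism.

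\textbf{Boundary.} Each $\theta(k)$ fixes $\partial D^m$, since both the faces $t\in\{0,1\}$ and the side $|y|=1$ lie in the identity region.

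\textbf{Injectivity.} On the central fibre $y=0$, where $\beta=1$, the map is exactly $\rho_1(k)$, which is faithful.

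\textbf{Orientation.} Each map preserves the $y$-coordinate and is increasing in $t$, so it is orientation preserving.

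\textbf{Main obstacle: continuity.} The key step, and the one I expect to be hardest, is continuity of $\theta(k)$ and its inverse $\theta(k^{-1})$. At points with $\beta(y)>0$, continuity holds because $h_y$ depends continuously on $y$. Near $\partial D^m$ the supporting interval shrinks to length $\beta(y)\to 0$. Then $\theta(k)$ moves points at most $\beta(y)$, which tends to $0$, so it converges uniformly to the identity there.

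Since each $\theta(k)$ is a continuous bijection of the compact Hausdorff space $D^m$ with continuous inverse $\theta(k^{-1})$, it is a homeomorphism. This yields the injective $\theta\colon K\to \mathrm{Homeo}^+(D^m,\partial D^m)$.
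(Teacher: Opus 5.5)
Your argument is correct. It follows the paper's overall strategy: a faithful action on $\mathbb{R}$ from Lemma \ref{act_on_R}, moved into $(0,1)$ and extended by the identity, then thickened to a fibred action on an $m$-dimensional model whose displacement tends to $0$ at $\partial D^m$. Where you differ is in how the action is killed at the boundary.

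The paper does not shrink anything. It lets $k$ act by $H_k(x_1,\dots,x_m)=(h_k(x_1),x_2,\dots,x_m)$ on all of $\mathbb{R}^m$, supported in the unbounded slab $(0,1)\times\mathbb{R}^{m-1}$, and conjugates by the radial compactification $j(x)=x/(1+\|x\|)$. Continuity at $\partial D^m$ then comes from the single estimate $\|H_k(x)-x\|\le 1$: bounded Euclidean displacement becomes vanishing displacement near the sphere at infinity.

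You instead stay on the compact cylinder $[0,1]\times D^{m-1}$ and conjugate fibrewise into intervals of length $\beta(y)$. The bound ``displacement $\le\beta(y)$'' is then built in by hand. The paper's route is shorter and needs no case distinction for degenerate fibres. Yours avoids the compactification and is more flexible: taking $\beta$ to vanish on a collar of $\partial D^{m-1}$ and squeezing into $[\tfrac14,\tfrac34]$ would even give support compactly contained in $\mathrm{int}(D^m)$. Nothing beyond fixing $\partial D^m$ pointwise is needed in Theorem \ref{extend}, though.

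A few small cleanups:
\begin{itemize}
\item On fibres with $\beta(y)=0$ the formula with $h_y^{-1}$ is undefined, so declare $\theta(k)$ to be the identity there, as you implicitly do.
\item The homeomorphism $\Phi$ you introduce is never used.
\item For orientation, ``preserves $y$ and increasing in $t$'' is best justified by the fibrewise straight-line homotopy to the identity. It is an isotopy, because convex combinations of increasing self-homeomorphisms of $[0,1]$ fixing the endpoints are again such homeomorphisms. Alternatively, note that any homeomorphism of $D^m$ fixing $\partial D^m$ pointwise is automatically orientation-preserving.
\end{itemize}
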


\begin{proof}
By Lemma \ref{act_on_R}, there exists an injective homomorphism $\rho \colon K \to \mathrm{Homeo}^+(\mathbb{R})$. Take an orientation-preserving homeomorphism $\phi \colon \mathbb{R} \to (0,1)$. For every $k \in K$, define $h_k \colon (0,1) \to (0,1)$ by
\[
h_k(x) = \begin{cases}
\phi \rho(k) \phi^{-1}(x) & \text{if } x \in (0,1) \\
x & \text{if } x \notin (0,1)
\end{cases}
\]
Then define $H_k \colon \mathbb{R}^m \to \mathbb{R}^m$ by
\[
H_k(x_1, x_2, \cdots, x_m) = (h_k(x_1), x_2, \cdots, x_n).
\]
Then $H_k$ is a well-defined orientation-preserving homeomorphism, and the map $k \mapsto H_k$ is an injective homomorphism. Let $j \colon \mathbb{R}^m \to \mathrm{int}(D^m)$ be the homomorphism $j(x) = \frac{x}{1+\|x\|}$, where $x=(x_1, x_2, \cdots, x_n)$. Define $\theta \colon K \to \mathrm{Homeo}^+(D^m)$ by
\[
\theta(k)(x)=\begin{cases}
jH_kj^{-1}(x) &\text{if }x \in \mathrm{int}(D^m)\\
x &\text{if }x \in \partial D^m
\end{cases}
\]
Suppose $j(y_i) \to u \in \partial D^m$. Then $\|y_i\| \to \infty$ and $\frac{y_i}{\|y_i\|}$. Note that $\|H_k(x)-x\| \leq 1$ for every $x \in \mathbb{R}^m$. Then
\[
\lim_{i \to \infty}\theta(k)(j(y_i)) = \lim_{i \to \infty} \frac{H_k(y_i)}{1+\|H_k(y_i)\|} = \lim_{i \to \infty} \frac{y_i}{\|y_i\|} = u.
\]
Hence $\theta(k)$ is continuous. Similarly, we can prove that $\theta(k)^{-1}$ is also continuous. Hence $\theta(k) \in \mathrm{Homeo}^+(D^m,\partial D^m)$ and $\theta$ is an injective homomorphism, as desired.
\end{proof}

Note that the Torelli group $\mathcal{I}(S_g)$ is countable since $\mathrm{Mod}(S_g)$ is finitely generated and hence is countable. Then we can apply Theorem \ref{act_faithfully_on_closed_disks} to obtain a faithful action of $\mathcal{I}(S_g)$ on a closed $m$-ball.

\section{Faithful Action of \texorpdfstring{$\mathrm{Mod}(S_g)$}{Mod(Sg)} on \texorpdfstring{$\mathbb{S}^{2g-1}$}{S2g-1}}\label{proof_of_main_thm}

Now we use the short exact sequence given by the symplectic representation of $\mathrm{Mod}(S_g)$ to extend the action of $\mathrm{Sp}(2g,\mathbb{Z})$ on the blown-up space given by Theorem \ref{Blow_up} to the desired faithful action of $\mathrm{Mod}(S_g)$ on $\mathbb{S}^{2g-1}$. More generally, we have the following:

\begin{theorem}\label{extend}
Let
\[
1 \longrightarrow K \longrightarrow G \stackrel{\pi}{\longrightarrow} Q \longrightarrow 1
\]
be a short exact sequence of groups with quotient homomorphism $\pi \colon G \to Q$. Let $m \geq 1$ and $X$, $X'$ be connected closed oriented $m$-manifolds. Let
\[
\rho \colon Q \to \mathrm{Homeo}^+(X), \quad \rho' \colon Q \to \mathrm{Homeo}^+(X')
\]
be group actions. Assume that there exists $x_0 \in X$ with trivial $Q$-stabilizer such that its $Q$-orbit $Z$ is dense in $X$. Assume that there is a surjective, continuous map $h \colon X' \to X$ such that:
\begin{enumerate}
  \item $h\rho'(q) = \rho(q)h$ for every $q \in Q$;
  \item $B_r = h^{-1}(\rho(r)x_0)$ is a topologically embedded closed $m$-ball for every $r \in Q$;
  \item $h^{-1}(x)$ is a single point for every $x \in X \setminus Z$;
  \item $\{B_r : r \in Q\}$ is a null family for some compatible metric on $X'$.
\end{enumerate}
Suppose that there is an injective homomorphism
\[
\theta \colon K \to \mathrm{Homeo}^+(D^m,\partial D^m).
\]
Then there exists an injective homomorphism
\[
f \colon G \to \mathrm{Homeo}^+(X'),
\]
such that $hf(g)=\rho(\pi(g))h$ for every $g \in G$.
\end{theorem}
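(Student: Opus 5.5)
The plan is to define $f(g)$ to be $\rho'(\pi(g))$ outside the blown-up balls, and to twist it by $K$-elements inside each ball $B_r$. First I fix identifications of the balls. Choose once and for all a homeomorphism $\beta_1 \colon D^m \to B_1$, where $1$ is the identity of $Q$. For each $r \in Q$, set $\beta_r = \rho'(r)\circ\beta_1 \colon D^m \to B_r$. This map lands in $B_r$ because $h\rho'(r) = \rho(r)h$ implies $\rho'(r)(B_1) = B_r$, and $r \mapsto B_r$ is a bijection because the stabilizer of $x_0$ is trivial. With these choices, $\rho'(q)\circ\beta_r = \beta_{qr}$. Next choose a set-theoretic section $s \colon Q \to G$ with $\pi\circ s = \mathrm{id}$ and $s(1)=1$. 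For $g \in G$ and $r \in Q$, define the cocycle
\[
k(g,r) = s(\pi(g) r)^{-1}\, g\, s(r) \in K .
\]
It satisfies $k(g g', r) = k(g, \pi(g')r)\,k(g', r)$. Now define
\[
f(g)(y) = \begin{cases} \rho'(\pi(g))(y) & y \notin \bigcup_r B_r,\\ \beta_{\pi(g)r}\,\theta(k(g,r))\,\beta_r^{-1}(y) & y \in B_r. \end{cases}
\]
Since $\theta(k)$ fixes $\partial D^m$ pointwise, this agrees with $\rho'(\pi(g))$ on $\partial B_r$. On $\mathrm{int}(B_r)$ it is conjugate to a homeomorphism of $D^m$.

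The algebraic checks come next. Using the cocycle identity together with $\beta_{qr} = \rho'(q)\beta_r$, one gets $f(gg') = f(g)f(g')$ directly, so $f$ is a homomorphism. Equivariance $hf(g) = \rho(\pi(g))h$ holds because $f(g)$ maps $B_r$ into $B_{\pi(g)r}$ and agrees with $\rho'(\pi(g))$ off the balls. For injectivity, take $g \ne 1$. If $\pi(g) \ne 1$, then $f(g)$ moves $B_1$ to $B_{\pi(g)} \ne B_1$, and these balls are disjoint as distinct fibers of $h$. If $g \in K\setminus\{1\}$, then $k(g,1) = g$, so $f(g)|_{B_1} = \beta_1\theta(g)\beta_1^{-1} \ne \mathrm{id}$ because $\theta$ is injective. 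Orientation preservation follows because $f(g)$ agrees with the orientation-preserving map $\rho'(\pi(g))$ on the open set $X' \setminus \bigcup_r B_r$, which is nonempty and dense in the complement of the balls. Alternatively, one can take a local degree computation at a point outside $Z$.

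The main obstacle is continuity of $f(g)$. Bijectivity is clear, and the inverse is $f(g^{-1})$, so once continuity is established $f(g)$ is a homeomorphism. Continuity holds at interior points of the balls by construction. At other points, consider $y_i \to y$. Split the sequence into the terms lying outside the balls and the terms lying inside balls. Terms inside a single fixed ball $B_r$ are handled by the boundary agreement. For terms lying in infinitely many distinct balls $B_{r_i}$, use hypothesis (4). Because $\rho'(\pi(g))$ is uniformly continuous and the images $B_{\pi(g)r_i}$ are again distinct members of the null family, their diameters tend to $0$. Hence $d\bigl(f(g)(y_i), \rho'(\pi(g))(y_i)\bigr) \le \mathrm{diam}(B_{\pi(g)r_i}) \to 0$, and $\rho'(\pi(g))(y_i) \to \rho'(\pi(g))(y)$. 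I expect the careful bookkeeping in this null-family argument to be the step needing the most attention. Everything else is formal.
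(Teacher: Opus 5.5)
Your construction is the paper's construction. The parametrizations $\beta_r=\rho'(r)\beta_1$ are the paper's $t_r$, and the section, the cocycle $k(g,r)=s(\pi(g)r)^{-1}gs(r)$, the piecewise definition and the null-family continuity argument all coincide. Your homomorphism, equivariance and injectivity checks are correct, and your injectivity argument is more explicit than the paper's.

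The one step that fails as written is orientation preservation. You claim $X'\setminus\bigcup_r B_r$ is open and nonempty. It is not open; in fact it has empty interior. To see this, let $U$ be an open subset of $X'\setminus\bigcup_r B_r = h^{-1}(X\setminus Z)$. Then $U$ is $h$-saturated, because the fibers over $X\setminus Z$ are single points. Also, $h$ is a quotient map, being a continuous surjection from a compact space onto a Hausdorff space. Hence $h(U)$ is an open subset of $X\setminus Z$, so it is empty because $Z$ is dense, and therefore $U=\varnothing$. Agreement of two homeomorphisms on a set with empty interior says nothing about local degree; for instance, a reflection of $\mathbb{R}^2$ agrees with the identity on a line.

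Your fallback, a local degree computation at a point of $h^{-1}(X\setminus Z)$, is not carried out and is not immediate. Every neighborhood of such a point meets infinitely many balls on which $f(g)\neq\rho'(\pi(g))$. It can be made to work, since $f(g)$ and $\rho'(\pi(g))$ are uniformly close near such a point, which gives a homotopy argument for the local degree.

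The simpler fix, which is the paper's argument, is to compute on an open set inside a ball. By invariance of domain, $\operatorname{int}(B_1)$ is open in $X'$. On it,
\[
f(g)=\rho'(\pi(g))\circ\beta_1\,\theta(k(g,1))\,\beta_1^{-1}.
\]
The map $\beta_1\theta(k(g,1))\beta_1^{-1}$ has local degree $+1$, because conjugation does not change local degree and $\theta(k(g,1))$ is orientation-preserving. So $f(g)$ has local degree $+1$ at points of $\operatorname{int}(B_1)$. The local degree of a homeomorphism of the connected manifold $X'$ is constant, so $f(g)\in\mathrm{Homeo}^+(X')$.
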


\begin{proof}
Fix a section $s \colon Q \to G$ such that $s(1) = 1$ and $\pi(s(r))=r$ for every $r \in Q$. The equivariance of $h$ implies that $\rho'(q)(B_r) = B_{qr}$ for every $q,r \in Q$. Choose an orientation-preserving parameterization $t_1 \colon D^m \to B_1$ of the embedded ball $B_1$. Define $t_r \colon D^m \to B_r$ by $t_r = \rho'(r)t_1$. Then $t_r$ is an orientation-preserving parameterization for every $r \in Q$. For every $g \in G$ and $r \in Q$, define
\[
k(g,r) = s(\pi(g)r)^{-1}gs(r).
\]
Since
\[
\pi(k(g,r)) = (\pi(g)r)^{-1} \pi r = 1,
\]
we have $k(g,r) \in K$. Now we define $f(g)$ for every $g \in G$ as follows. For every $r \in Q$, let
\[
f(g)\big|_{B_r}=t_{\pi(g)r}\theta(k(g,r))t_r^{-1}.
\]
Then let
\[
f(g)\big|_{X' \setminus \bigcup_r \operatorname{int}(B_r)} = \rho'(\pi (g)).
\]
These definitions agree on every $\partial B_r$ because $\rho'(\pi (g)) t_r = t_{\pi(g)r}$ and $\theta(k(g,r))$ fixes $\partial D^m$ pointwise.

Now we prove that $f(g)$ is continuous. Let $y_n \to y \in X'$. If $y \in \mathrm{int}(B_r)$ for some $r \in Q$, then eventually $y_n \in \mathrm{int}(B_r)$ and continuity follows from the continuity of $t_{\pi(g)r}$, $\theta(k(g,r))$ and $t_r^{-1}$. Now suppose $y\in X' \setminus \bigcup_r \operatorname{int}(B_r)$. By definition of $f(g)$, it suffices to consider the subsequence of the terms $y_n$ that lie in some $\mathrm{int}(B_r)$. Replace $\{y_n\}$ with this subsequence and let $y_n \in \mathrm{int}(B_{r_n})$. If there exists a subsequence $\{y_{n_k}\}$ such that $r_{n_k}=r$ for some $r \in Q$, then $y \in \partial B_r$ and the continuity follows from the continuity in $\mathrm{int}(B_r)$ and the definitions of $f(g)$ agree on $\partial B_r$. Otherwise, we can replace $\{y_n\}$ with a subsequence such that $\{r_n\}$ are pairwise distinct. Note that $f(g)y_n, \rho(\pi(g))y_n \in B_{\pi(g)r_n}$. Since $\{B_r : r \in Q\}$ is a null family, $\mathrm{diam}(B_{\pi(g)r_n}) \to 0$. Then $\rho(\pi(g))y_n \to \rho(\pi(g))y$ gives that $f(g)y_n \to \rho(\pi(g))y = f(g)y$. Hence $f(g)$ is continuous.

For every $g_1, g_2 \in G$,
\[
k(g_1g_2,r) = s(\pi(g_1) \pi(g_2)r)^{-1} g_1 s(\pi(g_2)r) s(\pi(g_2)r)^{-1} g_2r = k(g_1,\pi(g_2)r) k(g_2,r).
\]
Then $f(g_1) f(g_2) =f(g_1g_2)$ on $B_r$. The same equality also holds on $X' \setminus \bigcup_r \operatorname{int}(B_r)$ due to definition. Note that $f(1)$ is the identity. Then $f$ is an homomorphism. Hence $f(g)$ and $f(g)^{-1}$ are both continuous and $f(g)$ is a homeomorphism.

Since $t_{\pi(g)r}\theta(k(g,r))t_r^{-1}$ preserves local orientation for every $g \in G$ and every $r \in Q$, $f(g)$ also preserves orientation. Hence $f(g) \in \mathrm{Homeo}^+(X')$.

Since $x_0$ has a trivial stabilizer and $\theta$ is injective, $f$ is also injective.

Finally, if $y \in B_r$, then
\[
h(f(g)y) = \rho(\pi(g)r)x_0 = \rho(\pi(g))h(y).
\]
If $y \in X' \setminus \bigcup_r \operatorname{int}(B_r)$, then
\[
hf(g) = h \rho'(\pi(g)) = \rho(\pi(g))h.
\]
Hence $hf(g)=\rho(\pi(g))h$ for every $g \in G$.
\end{proof}

With these results established, the proof of Theorem \ref{main} is now straightforward.

\begin{proof}[Proof of Theorem \ref{main}]
Recall that we have the short exact sequence
\[
1 \longrightarrow \mathcal{I}(S_g) \longrightarrow \mathrm{Mod}(S_g) \stackrel{\Psi}{\longrightarrow} \mathrm{Sp}(2g,\mathbb{Z}) \longrightarrow 1.
\]
We apply Theorem \ref{extend} to this short exact sequence. Let $m=2g-1$, $X = \mathbb{P}^+(\mathbb{R}^{2g})$, and $\rho$ be the natural action of $\mathrm{Sp}(2g,\mathbb{Z})$ on $X$. By Lemma \ref{dense_orbit}, there exists $x_0 \in X = \mathbb{P}^+(\mathbb{R}^{2g})$ such that $x_0$ has a trivial stabilizer and a dense orbit $Z$ in $X$ with respect to the action $\rho$. Then by Theorem \ref{Blow_up}, there exists an action $\rho'$ of $\mathrm{Sp}(2g,\mathbb{Z})$ on $X' \approx \mathbb{S}^{2g-1}$ and $h \colon X' \to X$ that satisfies all the corresponding assumptions in Theorem \ref{extend}. By Theorem \ref{act_faithfully_on_closed_disks}, there exists an injective homomorphism $\theta \colon \mathcal{I}(S_g) \to \mathrm{Homeo}^+(D^m,\partial D^m)$. Hence Theorem \ref{extend} gives an injective homomorphism $f \colon \mathrm{Mod}(S_g) \to \mathrm{Homeo}^+(\mathbb{S}^{2g-1})$, as desired.
\end{proof}

\section*{Disclosure of AI Use}

In the preparation of this work, ChatGPT and Gemini were used to improve language clarity and correct grammatical mistakes.

\section*{Acknowledgments}

I would like to thank my advisor Yi Liu for introducing this topic to me and giving me valuable advice on this paper. I also thank J. Bowden and K. Mann for taking the time to read an earlier version of this paper and for their helpful feedback.

\bibliography{references}{}
\bibliographystyle{alpha}

\end{document}